\newcommand{\la}{\langle}
\newcommand{\ra}{\rangle}
\newcommand{\pr}{\partial}
\newcommand{\dom}{\Omega}
\newcommand{\im}{\mathfrak{Im}\;}
\newcommand{\spn}{{\mathrm{span}\,}}
\newcommand{\vc}{\boldsymbol}
\newcommand{\R}{\mathbb{R}}
\newcommand{\C}{\mathbb{C}}
\newcommand{\dd}{\,\text{d}}
\newcommand{\supp}{\mbox{supp\;}}
\newtheorem{thm}{Theorem}
\newtheorem{prop}{Proposition}
\newtheorem{lem}{Lemma}
\title{A density property for tensor products of gradients of harmonic functions and applications}
\date{}
\author{C\u{a}t\u{a}lin I. C\^{a}rstea\thanks{School of Mathematics, Sichuan University, Chengdu, Sichuan, 610064, P.R.China; email: catalin.carstea@gmail.com} \and Ali Feizmohammadi\thanks{Department of Mathematics, University College London, London, UK-WC1E 6BT, United Kingdom; email: a.feizmohammadi@ucl.ac.uk}}\date{}
\begin{document}
\maketitle

\begin{abstract}
We show that tensor products of $k$ gradients of harmonic functions, with $k$ at least three, are dense in $C(\overline{\Omega})$, for any bounded domain $\Omega$ in dimension 3 or higher. The bulk of the argument consists in showing that any smooth compactly supported $k$-tensor that is $L^2$-orthogonal to all such products must be zero. This is done by using a Gaussian quasi-mode based construction of harmonic functions in the orthogonality relation. We then demonstrate the usefulness of this result by using it to prove uniqueness in the inverse boundary value problem for a coupled quasilinear elliptic system. The paper ends with a discussion of the corresponding property for products of two gradients of harmonic functions, and the connection of this property with the linearized anisotropic Calder\'on problem.
\end{abstract}

\section{Introduction}

Let $\dom\subset\R^{1+n}$, $n\geq 2$ be a bounded domain. (We will label coordinates from $0$ to $n$.) It has already been known for some time that the set
\begin{equation}
\spn\{u_1u_2:u_1,u_2\text{ are harmonic in a neighborhood of }\overline{\dom}\}
\end{equation}
is dense in $C(\overline{\Omega})$. For example, we may follow \cite[Chapter 5]{isakov-book} in pointing out that if this were not the case, it is a consequence of the Hahn-Banach theorem that there exists a non-zero measure $\mu$ with support in $\overline{\dom}$ such that
\begin{equation}
\int u_1(\vc x)u_2(\vc x)\dd\mu(\vc x)=0,\quad\forall u_1,u_2\text{ harmonic in a neighborhood of }\overline{\dom}.
\end{equation}
In particular, for any $\vc z\not\in\overline{\dom}$ we can take $u_1(\vc x)=u_2(\vc x)=|\vc z-\vc x|^{2-n}$, which are harmonic. Then
\begin{equation}
\int |\vc z-\vc x|^{4-2n}\dd\mu(\vc x)=0,\quad\forall \vc z\not\in\supp(\mu),
\end{equation}
which implies, for example by a result of Riesz in \cite[Chapitre III]{riesz}, that $\mu=0$. 

We have quoted the proof above because it shows that in some way this can be seen as a very old result. Another classic way of proving the same thing is due to Calder\'on. In his paper \cite{Ca} he introduced the following harmonic functions. Let $\vc\xi,\vc\nu\in\R^n\setminus\{0\}$ be such that $\vc\xi\perp\vc\nu$, $|\vc\nu|=1$,   and let
\begin{equation}
\vc\zeta_{\pm}=\frac{1}{2}\left(\vc\xi\pm i|\vc\xi|\vc\nu\right).
\end{equation}
It is easy to check that
\begin{equation}
\vc\zeta_{+}\cdot\vc\zeta_{+}=\vc\zeta_{-}\cdot\vc\zeta_{-}=0,\quad \vc\zeta_{+}+\vc\zeta_{-}=\vc\xi.
\end{equation}
With these choices $e^{i\vc\zeta_{\pm}\cdot\vc x}$ is a harmonic function. 
If we choose 
\begin{equation}
u_1(\vc x)=e^{i\vc\zeta_+\cdot\vc x},\quad u_2(\vc x)=e^{i\vc\zeta_-\cdot\vc x},
\end{equation}
and $\mu$ is a measure as above, then
\begin{equation}
0=\int u_1(\vc x)u_2(\vc x)\dd\mu(\vc x)=\hat\mu(\vc \xi),\quad\forall \vc\xi\in\R^n\setminus\{0\}.
\end{equation}
Again it follows that $\mu=0$. We will use these harmonic functions again later in this paper.

Note also that we can  trivially conclude that
\begin{equation}
\spn\{u_1u_2\ldots u_k:u_1,\ldots,u_k\text{ are harmonic in a neighborhood of }\overline{\dom}\}
\end{equation}
is dense in $C(\overline{\Omega})$, for any $k\geq 2$, since $u_3$, \ldots, $u_k$ can be chosen to be constant.

There are various results generalizing this density result from harmonic functions to solutions of other partial differential equations. Notably, Cal\-de\-r\'on's construction has been generalized in \cite{SU} to solutions of the equation $\Delta u-qu=0$, which they then  use to solve Calder\'on's inverse boundary value problem, proposed in \cite{Ca}. In fact, examples of such density results are too numerous to comprehensively list here. We mention \cite{isakov7} for equations in a general form, and \cite{fksu}, \cite{buu} for solutions to elliptic equations that are zero on part of  the boundary of the domain.

\subsection{Our result}

In this paper we wish to consider the analogous problem for products of gradients of harmonic functions. We will show the following.

\begin{thm}\label{main-thm}
Let $\mathcal{O}$ be a bounded neighborhood of $\dom$. The set
\begin{equation}
\spn\{\nabla u_1\otimes\nabla u_2\otimes\cdots\otimes\nabla u_k: u_1, u_2,\ldots, u_k\text{ are harmonic in $\mathcal{O}$}\}
\end{equation}
with $k\geq3$ is dense in $C(\overline{\dom},\C^{\otimes k})$.
\end{thm}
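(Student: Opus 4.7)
My plan is to dualize via the Hahn--Banach theorem. If the span were not dense in $C(\overline{\dom},\C^{\otimes k})$, then by Riesz representation there would exist a nonzero $\C^{\otimes k}$-valued Radon measure $\vc\mu$ supported in $\overline{\dom}$ that annihilates every product $\nabla u_1\otimes\cdots\otimes\nabla u_k$ with $u_1,\ldots,u_k$ harmonic in $\mathcal{O}$. Since the Laplacian commutes with translation, convolving $\vc\mu$ with a compactly supported smooth mollifier produces a smooth $k$-tensor $\vc f\in C_c^\infty(\R^{1+n},\C^{\otimes k})$ that still annihilates every such product on a slightly shrunken neighborhood. The task thus reduces to showing that if
\begin{equation}
\int f_{i_1\ldots i_k}(\vc x)\,\pr_{i_1}u_1(\vc x)\cdots\pr_{i_k}u_k(\vc x)\dd\vc x=0
\end{equation}
for all $u_j$ harmonic on a neighborhood of $\supp\vc f$, then $\vc f\equiv0$.

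To probe $\vc f$ pointwise I would insert Gaussian quasi-modes, as the abstract announces. For a small parameter $h>0$, a chosen $\vc x_0$ in the interior of $\supp\vc f$, and any prescribed tuple of complex null vectors $\vc\zeta_1,\ldots,\vc\zeta_k\in\C^{1+n}$ (that is, $\vc\zeta_j\cdot\vc\zeta_j=0$), construct complex phases $\phi_j$ near $\vc x_0$ with $\nabla\phi_j(\vc x_0)=\vc\zeta_j$, satisfying the eikonal equation $(\nabla\phi_j)^2=\Ord(|\vc x-\vc x_0|^N)$ to high order, and whose Hessian at $\vc x_0$ has positive definite imaginary part. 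Solving the associated transport hierarchy produces amplitudes $a_j\in C_c^\infty(\mathcal{O})$ so that $v_{j,h}=e^{i\phi_j/h}a_j$ is harmonic modulo an $\Ord(h^N)$ remainder in $L^2$, and adding a correction $r_{j,h}$ obtained from $\Delta r_{j,h}=-\Delta v_{j,h}$ with standard elliptic estimates yields a genuine harmonic family $u_{j,h}=v_{j,h}+r_{j,h}$. To leading order, $\nabla u_{j,h}\approx(i\vc\zeta_j/h)e^{i\phi_j/h}a_j$, and the product of these is Gaussian-concentrated near $\vc x_0$ on scale $\sqrt h$.

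Substituting $\nabla u_{1,h}\otimes\cdots\otimes\nabla u_{k,h}$ into the orthogonality relation and extracting the leading asymptotics in $h$ by stationary phase -- the total phase has positive definite imaginary Hessian at $\vc x_0$, so the integrand is sharply Gaussian-localized there -- produces, after dividing by the appropriate power of $h$, the pointwise identity
\begin{equation}
f_{i_1\ldots i_k}(\vc x_0)\,(\vc\zeta_1)_{i_1}\cdots(\vc\zeta_k)_{i_k}=0
\end{equation}
for every interior $\vc x_0$ and every admissible tuple $(\vc\zeta_1,\ldots,\vc\zeta_k)$, subject to whatever closing condition (presumably $\sum_j\vc\zeta_j=0$) is needed to prevent the overall oscillation from annihilating the leading term.

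The main obstacle I anticipate is then the purely algebraic claim that the tensors $\vc\zeta_1\otimes\cdots\otimes\vc\zeta_k$ with each $\vc\zeta_j\in\C^{1+n}$ null span all of $\C^{\otimes k}$ when $k\geq3$ and $n\geq2$. The threshold $k\geq3$ is essential: at $k=2$ the constraint $\vc\zeta_1+\vc\zeta_2=0$ together with $\vc\zeta_j\cdot\vc\zeta_j=0$ forces $\vc\zeta_1\otimes\vc\zeta_2$ into a proper subspace of $\C^{\otimes2}$, which is precisely the obstruction underlying the linearized anisotropic Calder\'on problem discussed at the end of the paper. For $k\geq3$ the extra slot offers enough freedom to absorb the null constraints, and a polarization argument on the complex null cone -- which spans $\C^{1+n}$ whenever $n\geq2$ -- should give the desired spanning property and force $\vc f\equiv0$.
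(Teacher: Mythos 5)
Your reduction via Hahn--Banach plus mollification matches the paper, but the core of your argument breaks at a concrete point: the family of tensors your construction can test at leading order is far too small. For the product of your quasimodes to be localized rather than exponentially growing you need the leading covectors to satisfy (essentially) $\sum_j\vc\zeta_j=0$ together with $\vc\zeta_j\cdot\vc\zeta_j=0$. For $k=3$ this forces $\vc\zeta_1\cdot\vc\zeta_2=0$, i.e.\ $\vc\zeta_1,\vc\zeta_2$ span a totally isotropic subspace of $\C^{1+n}$; in the lowest admissible dimension $1+n=3$ the maximal isotropic subspaces of the complex quadratic form are one-dimensional, so $\vc\zeta_1,\vc\zeta_2,\vc\zeta_3$ are all proportional to a single null vector $\vc\zeta$. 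Your hoped-for pointwise identity then reads $f:\vc\zeta^{\otimes 3}=0$ for null $\vc\zeta$, which constrains only (part of) the totally symmetric part of $f$ --- exactly the part already handled by \cite{carfeiz}, which the paper invokes at the outset --- and gives nothing on the non-symmetric part, which is the entire new difficulty. So the spanning claim you flag as the ``main obstacle'' is in fact false under your closing condition for $k=3$, $n=2$, and the heuristic that ``$k\geq3$ offers enough freedom'' does not explain the threshold. Relatedly, the analytic picture ``Gaussian-concentrated near $\vc x_0$ on scale $\sqrt h$'' is not attainable: the eikonal equation forces $\nabla^2\phi_j(\vc x_0)\,\vc\zeta_j=0$, and a short computation shows $\mathrm{Im}\,\nabla^2\phi_j(\vc x_0)$ can be positive semidefinite only if it annihilates the real $2$-plane spanned by $\mathrm{Re}\,\vc\zeta_j$ and $\mathrm{Im}\,\vc\zeta_j$; with the three $\vc\zeta_j$ parallel (forced in dimension $3$), the product cannot decay within that common plane, so at best you localize on a $2$-plane, not at a point.

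This is precisely why the paper's proof has the shape it does: its quasimodes concentrate only in $x_2$; the leftover integrations in $x_0$, $x_1$, $\vc x'''$ are handled respectively by a Fourier parameter $\sigma$, the moment Lemma \ref{jacobi} in $(x_1-i\epsilon)$, and density of products of two harmonic functions; and, most importantly, the leading stationary-phase term is $B:\vc\alpha\otimes\vc\alpha\otimes\vc\alpha$, which vanishes identically for the antisymmetric part, so all new information is extracted from the $\lambda^{-3/2}$ and $\lambda^{-5/2}$ coefficients by varying the free constants $q_1^j$ in the amplitude expansion, with a final Calder\'on-exponential step to remove the residual structure $B_{jkl}=b_j\delta_{kl}-b_k\delta_{jl}$. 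A proof along your lines would have to supply a substitute for these subleading-order arguments; the leading order alone cannot see the part of $f$ that matters.
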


As above, it is sufficient to take $k=3$ since the $u_4$, \ldots, $u_k$ can be chosen from among the coordinate functions $\vc x\to x_j$, $j=0,\ldots,n$. Also as above, by the Hahn-Banach theorem, the following statement is sufficient to prove Theorem \ref{main-thm}.

\begin{thm}\label{integral-thm}
Suppose $\mu_{jkl}$ are finite measures with support in $\overline\dom$ such that
\begin{equation}
\sum_{j,k,l=0}^n\int\pr_ju_1(\vc x)\pr_ku_2(\vc x)\pr_lu_3(\vc x)\dd\mu_{jkl}(\vc x)=0,
\end{equation}
for all $u_1$, $u_2$, $u_3$ harmonic functions in $\mathcal{O}$. Then $\mu_{jkl}=0$, $j,k,l=0,\ldots,n$.
\end{thm}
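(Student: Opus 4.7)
The plan is to test the orthogonality against Calder\'on-type complex exponential harmonic functions and thereby reduce the statement to a rigidity identity for the Fourier transforms $\widehat{\mu_{jkl}}(\vc\xi)$. For each $\vc\xi\in\R^{1+n}$, let $\vc\zeta_1,\vc\zeta_2,\vc\zeta_3\in\C^{1+n}$ be complex isotropic vectors (so $\vc\zeta_i\cdot\vc\zeta_i=0$) with $\vc\zeta_1+\vc\zeta_2+\vc\zeta_3=\vc\xi$, and take $u_i(\vc x)=e^{i\vc\zeta_i\cdot\vc x}$. Upon substitution the exponential factors combine to $e^{i\vc\xi\cdot\vc x}$, and writing $T_{jkl}:=\widehat{\mu_{jkl}}(\vc\xi)$ the orthogonality becomes
\begin{equation*}
\sum_{j,k,l=0}^n T_{jkl}\,(\vc\zeta_1)_j(\vc\zeta_2)_k(\vc\zeta_3)_l=0
\end{equation*}
for every admissible triple. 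The problem thus reduces to showing that any such $T$ must vanish---a purely algebraic rigidity statement which I expect to be the main technical hurdle, and in which the dimension hypothesis $n\geq 2$ is essential.

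The algebra I plan proceeds in two steps. First, I would introduce a large parameter through the scaling $\vc\zeta_1=s\vc\eta+\vc\alpha$, $\vc\zeta_3=-s\vc\eta+\vc\gamma$ (so $\vc\zeta_2=\vc\xi-\vc\alpha-\vc\gamma$) with $\vc\eta$ isotropic, the constraints $\vc\eta\cdot\vc\alpha=\vc\eta\cdot\vc\gamma=0$ being forced by the isotropy of $\vc\zeta_1,\vc\zeta_3$ at leading orders. The $s^2$ coefficient of the identity yields $T[\vc\eta,\vc\zeta_2,\vc\eta]=0$ for every isotropic $\vc\eta$ and every isotropic $\vc\zeta_2$ with $\vc\eta\cdot\vc\zeta_2=\vc\eta\cdot\vc\xi$. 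When $n\geq 2$ a linear form vanishing on the intersection of the null cone with a generic affine hyperplane must vanish identically, which forces the quadratic form $\vc\eta\mapsto\sum_{jl}T_{jkl}\eta_j\eta_l$ to vanish on the null cone and hence to equal $c_k|\vc\eta|^2$ for constants $c_k$, i.e.\ $\tfrac12(T_{jkl}+T_{lkj})=c_k\delta_{jl}$. Repeating the scaling with the three pairs of positions swapped yields analogous identities on the symmetrizations of $T$ in the $(1,2)$ and $(2,3)$ index pairs; a direct manipulation of these three relations forces all the ``trace'' coefficients to vanish, leaving $T$ totally antisymmetric.

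For the second step, the antisymmetry collapses the original identity: with $\vc\zeta_3=\vc\xi-\vc\zeta_1-\vc\zeta_2$, the terms $T[\vc\zeta_1,\vc\zeta_2,\vc\zeta_1]$ and $T[\vc\zeta_1,\vc\zeta_2,\vc\zeta_2]$ vanish by antisymmetry, leaving $T[\vc\zeta_1,\vc\zeta_2,\vc\xi]=0$ whenever $\vc\zeta_1,\vc\zeta_2$ are isotropic and $\vc\xi-\vc\zeta_1-\vc\zeta_2$ is also isotropic. Parametrizing $\vc\zeta_1=t\vc\alpha$, $\vc\zeta_2=r\vc\beta$ with $\vc\alpha,\vc\beta$ isotropic and $\vc\alpha\cdot\vc\beta\neq 0$, this last constraint becomes a single scalar equation on $(t,r)$ whose generic solutions satisfy $tr\neq 0$, so $T[\vc\alpha,\vc\beta,\vc\xi]=0$ for all such pairs, and by continuity for all isotropic $\vc\alpha,\vc\beta$. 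Since the linear form $\vc\alpha\mapsto T[\vc\alpha,\vc\beta,\vc\xi]$ vanishes on the null cone, which spans $\C^{1+n}$, it is identically zero; the same reasoning in the $\vc\beta$ slot yields $T[\vc\alpha,\vc\beta,\vc\xi]=0$ for all $\vc\alpha,\vc\beta\in\C^{1+n}$ and every $\vc\xi\in\R^{1+n}$, which forces $T\equiv 0$. Consequently $\widehat{\mu_{jkl}}\equiv 0$, and each $\mu_{jkl}=0$.
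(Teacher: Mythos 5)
Your strategy of testing only against Calder\'on-type exponentials reduces everything to a frequency-by-frequency algebraic statement about $T=\widehat{\mu}(\vc\xi)$, and the fatal problem sits in your very last inference. From total antisymmetry you correctly get $T[\vc\alpha,\vc\beta,\vc\xi]=0$ for all $\vc\alpha,\vc\beta$, but this only says that the contraction of $T$ with $\vc\xi$ vanishes, not that $T=0$. For $1+n\geq4$ there are nonzero totally antisymmetric tensors with $\iota_{\vc\xi}T=0$ (for instance $T=\vc e_1\wedge\vc e_2\wedge\vc e_3$ when $\vc\xi=\vc e_0$ in $\R^4$), and such a $T$ satisfies \emph{every} constraint your method can produce at frequency $\vc\xi$: for any decomposition $\vc\zeta_1+\vc\zeta_2+\vc\zeta_3=\vc\xi$, isotropic or not, antisymmetry gives $T[\vc\zeta_1,\vc\zeta_2,\vc\zeta_3]=T[\vc\zeta_1,\vc\zeta_2,\vc\xi]=0$, so no further choices of $\vc\zeta_i$ at that frequency can see it. The obstruction is not an artifact of the Fourier picture: in $\R^4$ take $B_{jkl}=\sum_m\varepsilon_{jklm}\pr_m f$ with $f\in C_0^\infty(\dom)$; this $B$ is smooth, compactly supported, totally antisymmetric, satisfies $\sum_l\pr_l B_{jkl}=0$, and $B:\nabla u_1\otimes\nabla u_2\otimes\nabla u_3$ is, up to sign, the Jacobian determinant of $(u_1,u_2,u_3,f)$, a null Lagrangian, so its integral vanishes for \emph{arbitrary} smooth $u_1,u_2,u_3$, harmonic or not. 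Hence the totally antisymmetric, divergence-free part of the tensor is invisible to any argument of the type you propose, and your claim that the pointwise relation forces $T\equiv0$ is a non sequitur for $n\geq3$ (it is also worth weighing this example against Theorem \ref{integral-thm} itself in those dimensions).

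This is precisely where the paper's route differs from yours: after removing the part symmetric in the first two slots by citing the earlier work, all the remaining effort is spent on the antisymmetric component, and it is handled not with exponentials but with Gaussian quasimode harmonic functions concentrating near two-planes; the decisive information comes from the subleading $\lambda^{-5/2}$ term of a stationary-phase expansion, with the free constants $q_1^j$ used to isolate components, and Calder\'on exponentials enter only at the very end, once $B$ has been reduced to the form $B_{jkl}=b_j\delta_{kl}-b_k\delta_{jl}$, which, unlike the fully antisymmetric form, is detectable by exponentials. Your first step (showing the two partial symmetrizations are pure trace terms and cancelling them) is broadly in line with what is true, and in the lowest dimension $1+n=3$ your final step would actually close, since there $\Lambda^3$ is one-dimensional and $\iota_{\vc\xi}T=0$ with $\vc\xi\neq0$ does force $T=0$; but the theorem is asserted for all $n\geq2$, and for $n\geq3$ the proposal as written cannot reach the stated conclusion.
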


This result generalizes the one obtained in \cite{carfeiz} where it is assumed that $\mu_{jkl}=\mu_{kjl}$, for all $j,k,l=0.\ldots,n$. The main gain of this paper is that this symmetry requirement is shown to not be necessary.

The proof of this Theorem \ref{integral-thm} can be reduced to a similar result for measures of the form $B_{jkl}(\vc x)\dd\vc x$, with smooth, compactly supported $B_{jkl}$. To see that this is the case, first notice that for small enough $y$ and for $\mu_{jkl}$ as in the statement of the theorem,
\begin{equation}
\sum_{j,k,l=0}^n\int \pr_ju_1(\vc y+\vc x)\pr_ku_2(\vc y+\vc x)\pr_lu_3(\vc y+\vc x)\dd\mu_{jkl}(\vc x)=0,
\end{equation}
for all $u_1$, $u_2$, $u_3$ harmonic functions in a fixed neighborhood of $\overline{\dom}$. This is clearly true since translations of harmonic functions are still harmonic. 
Now let $\varphi_\epsilon$ be a smooth, compactly supported, approximation of identity. Then
\begin{multline}
\sum_{j,k,l=0}^n\int (\varphi_\epsilon*\mu_{jkl})(\vc x)\pr_ju_1(\vc x)\pr_ku_2(\vc x)\pr_lu_3(\vc x)\dd\vc x\\[5pt]
=\sum_{j,k,l=0}^n\int \varphi_\epsilon(\vc y)
\left(\int\pr_ju_1(\vc y+\vc x)\pr_ku_2(\vc y+\vc x)\pr_lu_3(\vc y+\vc x)\dd\mu_{jkl}(\vc x)\right)\dd\vc y\\[5pt]=0.
\end{multline}
It follows that it is enough to prove the following proposition.

\begin{prop}\label{prop}
Suppose $B=(B_{jkl})$  is a 3-tensor such that $B\in C_0^\infty(\mathcal{O})$,  and
\begin{equation}\label{basic-integral-identity}
\int B(\vc x):\nabla u_1(\vc x)\otimes\nabla u_2(\vc x)\otimes\nabla u_3(\vc x)\dd \vc x=0
\end{equation}
for all smooth functions $u_1$, $u_2$, $u_3$ which are harmonic in $\mathcal{O}$. Then $B=0$.
\end{prop}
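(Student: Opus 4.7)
The plan is to pass to the Fourier side via Calder\'on-type complex exponential harmonic functions and then to use a Gaussian quasi-mode / large-parameter expansion to generate enough rank-one tensor tests to force every component of $\widehat B$ to vanish.

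Substituting $u_j(\vc x)=e^{i\vc\zeta_j\cdot\vc x}$, with $\vc\zeta_j\in\C^{n+1}$ satisfying $\vc\zeta_j\cdot\vc\zeta_j=0$, into \eqref{basic-integral-identity} yields, for every $\vc\xi\in\R^{n+1}$ and every triple with $\vc\zeta_1+\vc\zeta_2+\vc\zeta_3=-\vc\xi$,
\begin{equation*}
\sum_{j,k,l=0}^n\widehat{B_{jkl}}(\vc\xi)\,\zeta_{1,j}\zeta_{2,k}\zeta_{3,l}=0.
\end{equation*}
The problem thus reduces to the algebraic spanning statement that, for each fixed $\vc\xi$, the rank-one tensors $\vc\zeta_1\otimes\vc\zeta_2\otimes\vc\zeta_3$ taken over the affine variety
\begin{equation*}
V_{\vc\xi}:=\{(\vc\zeta_1,\vc\zeta_2,\vc\zeta_3)\in(\C^{n+1})^3 : \vc\zeta_j\cdot\vc\zeta_j=0,\ \vc\zeta_1+\vc\zeta_2+\vc\zeta_3=-\vc\xi\}
\end{equation*}
span $(\C^{n+1})^{\otimes 3}$, since this would give $\widehat B(\vc\xi)=0$ pointwise and hence, by continuity, $B=0$.

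To exhibit enough such tensors I would introduce a large parameter $\lambda$ via the quasi-mode expansion $\vc\zeta_j(\lambda)=\lambda\vc\alpha_j+\vc\beta_j^{(0)}+\lambda^{-1}\vc\beta_j^{(1)}+\cdots$, where the $\vc\alpha_j\in\C^{n+1}$ are null with $\sum_j\vc\alpha_j=0$ (the leading balance), $\vc\beta_j^{(0)}$ satisfies $\vc\alpha_j\cdot\vc\beta_j^{(0)}=0$ together with $\sum_j\vc\beta_j^{(0)}=-\vc\xi$, and the subsequent $\vc\beta_j^{(m)}$ are determined recursively so that $\vc\zeta_j(\lambda)\cdot\vc\zeta_j(\lambda)=0$ holds identically in $\lambda$. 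Substituting this expansion into the algebraic identity above and collecting powers of $\lambda$ produces a hierarchy of linear conditions on $\widehat B(\vc\xi)$: the $\lambda^3$ coefficient sees only $\vc\alpha_1\otimes\vc\alpha_2\otimes\vc\alpha_3$ with null $\vc\alpha_j$ summing to zero (in the critical dimension $n+1=3$ one checks directly that this forces the $\vc\alpha_j$ to be collinear, so only the one-parameter family of rank-one tensors $\vc\alpha^{\otimes 3}$ survives, i.e.\ the totally symmetric content that is already accessible under the symmetry hypothesis of \cite{carfeiz}), while the $\lambda^2$ and $\lambda^1$ coefficients depend linearly on the free parameters $\vc\beta_j^{(m)}$ and are precisely what should detect the non-symmetric components of $\widehat B(\vc\xi)$.

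The main obstacle, and the essentially new step beyond \cite{carfeiz}, is to verify that this hierarchy really does force $\widehat B(\vc\xi)=0$. This will be most delicate in the minimal admissible dimension $n+1=3$, where $V_{\vc\xi}$ has only three complex dimensions and the null cone in $\C^3$ is just a rational curve. I would handle it by fixing two null directions $\vc\alpha_1,\vc\alpha_2$ generically, solving algebraically for $\vc\alpha_3$ and the $\vc\beta_j^{(0)}$ subject to their constraints $\vc\alpha_j\cdot\vc\beta_j^{(0)}=0$ and $\sum_j\vc\beta_j^{(0)}=-\vc\xi$, and then checking component by component that the resulting family of rank-one tensors in $(\C^{n+1})^{\otimes 3}$ is not contained in any proper subspace --- in particular, that it detects the $j\leftrightarrow k$ antisymmetric part of $B$, which was invisible under the symmetry assumption $B_{jkl}=B_{kjl}$ of \cite{carfeiz}. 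Once this algebraic spanning is established, the proposition follows.
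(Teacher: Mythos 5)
Your reduction to a fixed-frequency spanning statement is exactly where the argument breaks, and it breaks for a concrete reason in every dimension except $1+n=3$. Take any alternating $3$-tensor $\omega$ with $\sum_l\omega_{jkl}\xi_l=0$. For every triple with $\vc\zeta_1+\vc\zeta_2+\vc\zeta_3=-\vc\xi$,
\begin{equation*}
\omega:\vc\zeta_1\otimes\vc\zeta_2\otimes\vc\zeta_3
=-\omega(\vc\zeta_1,\vc\zeta_2,\vc\xi)-\omega(\vc\zeta_1,\vc\zeta_2,\vc\zeta_1)-\omega(\vc\zeta_1,\vc\zeta_2,\vc\zeta_2)=0,
\end{equation*}
the null conditions never being used. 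Nonzero such $\omega$ exist precisely when $1+n\geq4$ (alternating $3$-tensors built on $\vc\xi^{\perp}$), so in those dimensions the rank-one tensors over $V_{\vc\xi}$ span a proper subspace of $(\C^{1+n})^{\otimes3}$ and the pointwise conclusion $\widehat B(\vc\xi)=0$ is simply not available. Nor can you repair this by coupling different frequencies while staying with exponentials: in dimension four the field $B_{jkl}=\sum_m\epsilon_{jklm}\pr_m f$, $f\in C_0^\infty(\mathcal{O})$, with $\epsilon_{jklm}$ the Levi--Civita symbol, satisfies every identity that exponential solutions can produce, since the full contraction is a constant times $\hat f$ multiplied by $\det[\vc\zeta_1,\vc\zeta_2,\vc\zeta_3,\vc\zeta_1+\vc\zeta_2+\vc\zeta_3]=0$ (equivalently, an integration by parts using only antisymmetry and the symmetry of second derivatives — a computation that does not even invoke harmonicity, and which you should ponder in relation to the high-dimensional reading of the statement itself). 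In the one remaining case $1+n=3$ your spanning claim does appear to be true, but that is precisely where the proposal stops short: the expansion $\vc\zeta_j(\lambda)=\lambda\vc\alpha_j+\vc\beta_j^{(0)}+\cdots$ only parametrizes curves inside $V_{\vc\xi}$, so collecting powers of $\lambda$ yields conditions that are automatic consequences of the identity you started from and cannot certify spanning; your leading-order observation (three null vectors of $\C^3$ summing to zero are collinear) is correct but only recovers the symmetric information, and the assertion that the next orders ``should detect'' the non-symmetric part is the entire theorem, left unproved. What is actually needed there is an algebraic argument, e.g.\ showing that a trilinear form vanishing on $V_{\vc\xi}$ lies in the ideal generated by $\vc\zeta_1\cdot\vc\zeta_1$, $\vc\zeta_2\cdot\vc\zeta_2$ and $(\vc\xi+\vc\zeta_1+\vc\zeta_2)\cdot(\vc\xi+\vc\zeta_1+\vc\zeta_2)$ and comparing bidegrees; nothing of that sort is attempted.

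Be aware also that, despite the wording, this is not the paper's route, and renaming the frequency expansion a ``Gaussian quasi-mode expansion'' obscures the difference. The paper never expands the frequency of an exponential: it first removes the part of $B$ symmetric in a pair of indices by quoting \cite{carfeiz}, then tests \eqref{basic-integral-identity} against genuinely non-exponential harmonic functions built from Gaussian quasi-modes concentrating near the plane $\{x_2=0\}$, whose amplitudes carry free constants $q_1^j$; a stationary-phase expansion in $x_2$ is performed, and the coefficients of $q_1^j$ at orders $\lambda^{-3/2}$ and $\lambda^{-5/2}$ give, respectively, $B:\vc e_2\otimes\vc\alpha\otimes\vc\alpha=0$ and the vanishing of the fully antisymmetric part; Calder\'on exponentials enter only at the very end, after $B$ has been reduced to the special form $B_{jkl}=b_j\delta_{kl}-b_k\delta_{jl}$. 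If you wish to pursue an exponential-only argument you must at minimum restrict to $1+n=3$ and actually prove the spanning statement there; in dimensions $1+n\geq4$ the obstruction above shows that testing against exponentials cannot suffice.
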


We can further simplify our task by appealing to a result proved in \cite{carfeiz}. Suppose $B$ is as in the statement of Proposition \ref{prop} and let 
\begin{equation}
C_{jkl}=\frac{1}{2}(B_{jkl}+B_{kjl}).
\end{equation}
Then for any harmonic functions $u_1$, $u_2$, $u_3$ we have
\begin{multline}
\int C(\vc x):\nabla u_1(\vc x)\otimes\nabla u_2(\vc x)\otimes\nabla u_3(\vc x)\dd\vc x\\[5pt]
=\frac{1}{2}\int B(\vc x):\nabla u_1(\vc x)\otimes\nabla u_2(\vc x)\otimes\nabla u_3(\vc x)\dd\vc x\\[5pt]+\frac{1}{2}\int B:\nabla u_1(\vc x)\otimes\nabla u_3(\vc x)\otimes\nabla u_2(\vc x)\dd\vc x=0.
\end{multline}
It was shown in \cite{carfeiz} that $C=0$. It follows that  
\begin{equation}
B_{jkl}(\vc x)=-B_{jlk}(\vc x),\quad\forall\vc x\in\mathcal{O},
\end{equation}
which we will assume to be the case for the rest of this paper.

The harmonic functions introduced by Calder\'on are not sufficient for the proof of our result. We will instead work with a family of harmonic functions whose construction is based on Gaussian quasi-modes on arbitrary hyperplanes in $\R^{1+n}$. These are approximate eigenfunctions to the Laplacian on $\R^n$ which concentrate along straight lines. Using these, we can then construct harmonic functions, defined on bounded subsets of $\R^{1+n}$, that concentrate on planes. Such constructions  have been  used recently to solve inverse problems for linear elliptic equations in \cite{KSa}, \cite{DKLS} and non-linear elliptic equations in \cite{FO}, \cite{LLLS1}.

The construction of Gaussian quasi-modes in elliptic equations is based on an analogue for hyperbolic equations, namely Gaussian beams. These are approximate solutions to the wave equation that concentrate on null geodesics. They were introduced in the works \cite{BU}, \cite{Ralston} and have been used in the context of inverse problems in many works. For example see \cite{KKL} and the references therein. 
Solutions to equations of the form $\Delta u+qu=0$ that concentrate on planes have also been constructed in \cite{GU} by a different method.

In section \ref{gauss-construction} we give a summary of the construction of Gaussian quasi-mode harmonic functions. The details of the particular incarnation of the construction we are using have been worked out in \cite{carfeiz}, so we only provide the results of the computations here. The harmonic functions we introduce depend on an assymptotic parameter $\lambda$ which will later be made to go to infinity. There are assymptotic expansions (in $\lambda^{-1}$ and $ x_2$) for these harmonic functions and we give a few of the first terms whose exact expressions we will need to use. It is also important for our later computation to note that our harmonic functions also depend on a number of parameters that can be arbitrarily chosen.

In section \ref{stationary} we use the special harmonic functions constructed in the previous section in order to obtain information form \eqref{basic-integral-identity}. With appropriate choices for $u_1$, $u_2$, $u_3$ , we let $\lambda\to\infty$. Using a stationary phase theorem we obtain an expansion of the left hand side into powers of $\lambda^{-1}$. The coefficients of each power of $\lambda^{-1}$ must each be zero independently, so they each provide possibly distinct information.

In section \ref{expansion} we carry out the details of the above mentioned expansion. It turns out to be necessary to use both the first and second orders in the expansion given by the stationary phase theorem (the zero order is automatically zero in our case, so provides no information). We obtain enough information about the structure of $B$ that at the last step we are able to use Calder\'on harmonic functions in order to conclude that it must be zero.

In section \ref{application} we give an application of our result (of Theorem \ref{integral-thm} to be precise). This is a proof of uniqueness in the inverse boundary value problem for a coupled quasilinear system of two equations of the form
\begin{equation}
\left\{\begin{array}{l} -\Delta u^J+\sum_{K=1}^2\nabla(A^{JK}:\nabla u^J\otimes\nabla u^K)=0,\quad J=1,2,\\[5pt] (u^1,u^2)|_{\pr\dom}=(f^1,f^2).\end{array}\right.
\end{equation}
The method we use for reducing the question of uniqueness to the statement of Theorem \ref{integral-thm} is the standard ``second linearization'' approach, first used in \cite{I1}.
For elliptic semilinear or quasilinear equations, there are a number of uniqueness results that are known. For semilinear equations, examples include \cite{FO}, \cite{IN}, \cite{IS}, \cite{KU}, \cite{KU2},   \cite{LLLS1}, \cite{LLLS2}, \cite{S2}. For quasilinear equations, not in divergence form, see \cite{I2}. For  quasilinear equations in divergence form see  \cite{CK}, \cite{CNV},  \cite{EPS}, \cite{HS}, \cite{KN}, \cite{MU},  \cite{Sh}, \cite{S1}, \cite{S3}, \cite{SuU}, (also \cite{C} for quasilinear time-harmonic Maxwell systems).

Finally, it is natural to ask what happens when we take products of only two gradients of harmonic functions. Indeed, this question appears naturally when one considers the linearized inverse boundary value problem for an elliptic equation with anisotropic coefficients (or the linearized anisotropic Calder\'on problem). In section \ref{calderon} we will explain this connection and prove the following theorem.
\begin{thm}\label{lin-cal}
Suppose $C$ is a  matrix with coefficients $C_{jk}\in L^p(\dom)$, $j,k=0,\ldots,n$, $1<p<\infty$. Then 
\begin{equation}\label{2-int-id}
\int_\dom C(\vc x):\nabla u_1(\vc x)\otimes\nabla u_2(\vc x)\dd\vc x=0,
\end{equation}
for all $u_1$, $u_2$ harmonic functions in $\R^{1+n}$ {\bf if and only if} there exist $v_j\in W^{1,p}_0(\dom)$, $a_{j,k}\in L^p(\dom)$,  with $a_{jk}=-a_{kj}$,  $\sum_j\pr_j a_{jk}=0$, $j,k=0,\ldots,n$, such that
\begin{equation}
C_{jk}=\pr_j v_k+\pr_k v_j-\delta_{jk}\nabla\cdot\vc v+a_{jk}.
\end{equation}
The requirement that $\sum_j\pr_j a_{jk}=0$ is to be taken in the sense of $\mathcal{E}'(\R^{1+n})$.
\end{thm}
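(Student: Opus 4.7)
\emph{Sufficiency.} Substitute the decomposition into \eqref{2-int-id} and treat symmetric and antisymmetric pieces separately. For $K(\vc v)_{jk}:=\pr_j v_k+\pr_k v_j-\delta_{jk}\nabla\cdot\vc v$, integrate by parts; boundary terms vanish because $v_k\in W^{1,p}_0(\dom)$, the terms containing $\Delta u_i$ drop out by harmonicity, and the remaining pieces telescope so that the contribution of $\pr_j v_k+\pr_k v_j$ reduces to $-\int_\dom\vc v\cdot\nabla(\nabla u_1\cdot\nabla u_2)\dd\vc x$, exactly cancelled by the $-\delta_{jk}\nabla\cdot\vc v$ contribution after one more integration by parts. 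For the antisymmetric part, rewrite $a_{jk}\pr_ju_1\pr_ku_2=a_{jk}\pr_j(u_1\pr_ku_2)-a_{jk}u_1\pr_j\pr_ku_2$; after summation the second term vanishes by antisymmetry of $a_{jk}$ paired with the symmetric Hessian, while the first term equals $-\langle\sum_j\pr_ja_{jk},u_1\pr_ku_2\rangle=0$ in the $\mathcal{E}'\times\mathcal{E}$ pairing, legitimate since $a_{jk}$ (extended by zero) is compactly supported.

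\emph{Necessity.} Extend $C$ by zero to $\R^{1+n}$ and split $\hat C=\hat S+\hat A$ into symmetric and antisymmetric parts. Test \eqref{2-int-id} against the Calder\'on exponentials $u_\pm(\vc x)=e^{i\vc\zeta_\pm\cdot\vc x}$ with $\vc\zeta_\pm=\tfrac12(\vc\xi\pm i|\vc\xi|\vc\nu)$, $\vc\xi\in\R^{1+n}\setminus\{0\}$, $\vc\nu\perp\vc\xi$, $|\vc\nu|=1$. Expanding $(\vc\zeta_+)_j(\vc\zeta_-)_k$ and separating its polynomial dependence on $\vc\nu$ produces two independent identities. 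The linear-in-$\vc\nu$ piece gives $\sum_k\xi_k\hat A_{jk}(\vc\xi)=0$ for every real $\vc\xi$, equivalently $\sum_j\pr_jA_{jk}=0$ in $\mathcal{E}'(\R^{1+n})$, and one sets $a_{jk}:=A_{jk}$. The constant and quadratic pieces force the restriction of $\hat S(\vc\xi)$ to the $n$-dimensional subspace $\vc\xi^\perp$ to be a scalar multiple of the identity, namely $\hat S(\vc\xi)|_{\vc\xi^\perp}=-|\vc\xi|^{-2}(\vc\xi^T\hat S(\vc\xi)\vc\xi)I_{\vc\xi^\perp}$.

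Define $\vc v$ globally by $\hat v_j(\vc\xi):=-i\sum_k\xi_k\hat S_{jk}(\vc\xi)/|\vc\xi|^2$, equivalently $v_j=\sum_k\pr_k\Delta^{-1}S_{jk}$. The multipliers $\xi_j\xi_k/|\vc\xi|^2$ are of Calder\'on-Zygmund type, so $\nabla\vc v\in L^p(\R^{1+n})$, and a direct Fourier-side calculation using the structural identity on $\hat S$ shows $K(\vc v)_{jk}=S_{jk}$ as tempered distributions on $\R^{1+n}$. Since $S$ vanishes outside $\overline{\dom}$, so does $K(\vc v)$; taking the trace gives $(n-1)\nabla\cdot\vc v=0$ there, so $\pr_jv_k+\pr_kv_j=0$ outside $\overline{\dom}$, meaning $\vc v$ restricts to a Killing vector field on each connected component of the complement. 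The Riesz-potential decay of $\vc v$ at infinity (governed by the fundamental solution of $\Delta$ in dimension $n+1\geq 3$) forces $\vc v\equiv 0$ on the unbounded component, so the trace of $\vc v$ on $\partial\dom$ vanishes and $\vc v\in W^{1,p}_0(\dom)$, closing the argument.

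\emph{Main obstacle.} The delicate point is passing from the algebraic Fourier identity $K(\vc v)=S$ on $\R^{1+n}$, with $\vc v$ merely a Riesz potential of $S$, to the trace condition $\vc v|_{\partial\dom}=0$. This rests on the rigidity of $K$ (its kernel reduces to affine Killing fields when $n\geq 2$) combined with decay at infinity; for domains whose complement has bounded components, a finite-dimensional correction accounting for Killing fields on each ``hole'' would be needed in order to identify $\vc v$ with an element of $W^{1,p}_0(\dom)$.
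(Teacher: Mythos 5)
Your sufficiency argument is the same as the paper's. For necessity you take a genuinely different route: the paper first mollifies $C^s$, sets $v_j=\sum_k G_0(\pr_k C^s_{jk})$ with $G_0$ the Dirichlet Green's operator of a bounded neighborhood $\mathcal{O}\supset\overline\dom$, introduces the auxiliary tensor $B=C^s-K(\vc v)$ (writing $K(\vc v)_{jk}=\pr_j v_k+\pr_k v_j-\delta_{jk}\nabla\cdot\vc v$), shows it is divergence free and annihilated by the Calder\'on exponentials, and concludes $B=0$, before removing the smoothness assumption by a limiting argument; you instead solve $K(\vc v)=C^s$ globally with $v_j=\sum_k\pr_k\Delta^{-1}C^s_{jk}$ and verify the identity on the Fourier side. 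Your multiplier computation is correct: the exponentials give exactly $\hat S(\vc\xi)\restriction_{\vc\xi^\perp}=-|\vc\xi|^{-2}(\hat S:\vc\xi\otimes\vc\xi)I$, and decomposing $\hat S$ along $\vc\xi$ and $\vc\xi^\perp$ shows $\widehat{K(\vc v)}=\hat S$. This buys you two simplifications: Calder\'on--Zygmund boundedness lets you work directly at the $L^p$ level, so the paper's mollification step disappears, and you never need the paper's intermediate claim that $B\in C_0^\infty(\mathcal{O})$ (which, as stated there, is not actually correct, since $\vc v$ does not vanish near $\pr\mathcal{O}$).

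Concerning the obstacle you flag: it is real, but it is not a defect of your route relative to the paper's. The paper disposes of the same step with the single sentence that ``since $\mathcal{O}$ can be chosen arbitrarily, $\vc v$ vanishes outside $\overline\dom$'', which is precisely the leap you are worried about, and no finite-dimensional correction can repair it in general because the necessity direction fails as stated when $\R^{1+n}\setminus\overline\dom$ is disconnected. Indeed, take $\dom=\{1<|\vc x|<2\}\subset\R^3$, let $\chi$ be a radial cutoff equal to $1$ on $\{|\vc x|\le1\}$ and $0$ on $\{|\vc x|\ge2\}$, and set $\vc w(\vc x)=\chi(\vc x)\,(\vc\omega\times\vc x)$. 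Then $C:=K(\vc w)$ is symmetric, lies in $L^p(\dom)$ with support in $\overline\dom$, and satisfies \eqref{2-int-id} (integrate by parts; $\vc w$ has compact support). Yet if $C=K(\vc u)+a$ with $\vc u\in W^{1,p}_0(\dom)$ and $a$ antisymmetric, then $a=0$, so $K(\vc w-\vc u)=0$ on the connected shell, hence $\vc w-\vc u$ is an affine Killing field there; its vanishing trace on $\{|\vc x|=2\}$ forces it to be zero, and then $\vc u$ would have trace $\vc\omega\times\vc x\neq0$ on $\{|\vc x|=1\}$, a contradiction. So your argument is complete exactly when $\R^{1+n}\setminus\overline\dom$ is connected: there, Killing rigidity plus the $|\vc x|^{-n}$ decay of your Riesz potential gives $\vc v=0$ outside $\overline\dom$, hence $\vc v\in W^{1,p}_0(\dom)$ (granting the mild boundary regularity needed to identify $W^{1,p}$ functions vanishing outside $\overline\dom$ with $W^{1,p}_0(\dom)$, a point the paper also passes over silently). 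That connectedness hypothesis is also the honest scope of the paper's own proof, and the theorem should be read with it.
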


This result is the anisotropic analogue of \cite{Ca}. The methods we use to prove Theorem \ref{lin-cal}, unlike those employed in the proof of Theorem \ref{integral-thm}, are not particularly sophisticated. In fact, we only need to use harmonic functions of the form $e^{i\vc\zeta_\pm\cdot\vc x}$ in \eqref{2-int-id}, together with the same methods for improving the regularity of $C$ used above, in order to obtain the result. It therefore seems like some version of Theorem \ref{lin-cal} should have been known a long time ago. A version of the result appears in \cite[Theorem 1.13]{shar}, for  $C$ smooth\footnote{We thank Mikko Salo for pointing out this reference.}. We provide our own proof below for the benefit of the interested reader.

\section{Gaussian quasi-mode construction of harmonic functions}\label{gauss-construction}

In this section we will give a brief description of a family of harmonic functions constructed using Gaussian quasi-modes on hyperplanes in $\R^{1+n}$. This is a summary of results obtained in \cite{carfeiz} and the details of the computations are left out.

For the coordinates of $\vc x\in\R^{1+n}$ we will use the notation
\begin{equation}
\vc x=( x_0, x_1, x_2,\ldots, x_n)=( x_0,\vc x')=( x_0, x_1,\vc x'')=( x_0, x_1, x_2,\vc x'''),
\end{equation}
and correspondingly introduce the differential operators
\begin{equation}
\nabla=(\pr_0,\pr_1,\ldots,\pr_n),\quad \nabla'=(0,\pr_1,\ldots,\pr_n)
\end{equation}
and
\begin{equation}
\Delta=\sum_{j=0}^n\pr_j^2,\quad \Delta'=\sum_{j=1}^n\pr_j^2.
\end{equation}
Operators $\nabla''$, $\nabla'''$, $\Delta''$, $\Delta'''$ can also be defined in the same way.
Note that without any loss of generality we may assume that $\mathcal{O}\subset\{x_1>0\}$.

Let $\tau=\lambda+i\sigma$, $\lambda, \sigma\in\R$. We construct harmonic functions of the form
\begin{equation}\label{quasi-construction}
u_\tau^\pm(\vc x)=e^{\pm\lambda x_0}\left( e^{\pm i\sigma x_0} e^{i\tau\Psi(x_1,x_2)}a_\tau(x_1,x_2)+r_\tau(\vc x)   \right).
\end{equation}
Note that
\begin{equation}
\Delta u_\tau^\pm=e^{\pm\tau x_0}[\tau^2+\Delta'] \left(e^{i\tau\Psi}a_\tau\right)
+\Delta \left(e^{\pm\lambda x_0}r_\tau\right),
\end{equation}
and
\begin{multline}\label{quasi-op}
(\tau^2+\Delta') e^{i\tau\Psi}a_\tau\\[5pt]=
e^{i\tau\Psi}\left[ \tau^2(1-|\nabla'\Psi|^2)a_\tau+
i\tau(2\nabla'\Psi\cdot\nabla' a_\tau+(\Delta'\psi)a_\tau)
+(\Delta' a_\tau) \right].
\end{multline}

The quantity $\lambda$ will be an asymptotic parameter, in the sense that we eventually intend to take the limit $\lambda\to\infty$. We will construct $\Psi$ and $a_\tau$ so that this quantity vanishes to high order in both $\lambda^{-1}$ and $x_2$.

Let $M$ be a large natural number, $\delta>0$, let $\chi:\R\to[0,\infty)$ be a smooth function such that $\chi(t)=1$ for $|t|<\frac{1}{2}$ and $\chi(t)=0$ for $|t|>1$, and let $h:\R^{n-1}\to\R$ be a harmonic function.
We make the following Ans\"atze:
\begin{equation}\label{An-1}
\Psi(x_1,x_2)=\sum_{j=0}^M\psi_j(x_1)x_2^j,
\end{equation}
\begin{equation}\label{An-2}
a_\tau(x_1,x_2)=
\chi(\frac{x_2}{\delta})h(x''')
\sum_{k=0}^Mv_k(x_1,x_2)\tau^{-k},
\end{equation}
\begin{equation}\label{An-3}
v_k(x_1,x_2)=\sum_{j=0}^M v_{k;j}(x_1)x_2^j.
\end{equation}

Since we would like for the quantity in equation \eqref{quasi-op} to vanish to high order in $\lambda^{-1}$ and $x_2$, we require that the  following conditions hold:
{\setlength{\baselineskip}{1.5\baselineskip}\begin{itemize}
\item[(i)]$\im \Psi\geq \kappa |x_2|^2$ ;
\item[(ii)]$\pr_2^j(|\nabla'\Psi|^2-1)|_{x_2=0}=0$ for $j=0,1,\ldots,M$;
\item[(iii)]$\pr_2^j(2\nabla'\Psi\cdot\nabla' v_0+(\Delta'\Psi)v_0)|_{x_2=0} =0$ for $j=0,1,\ldots,M$.
\item[(iv)]$\pr_2^j(2\nabla'\Psi\cdot\nabla' v_k+(\Delta'\Psi)v_k-i\Delta'v_{k-1})|_{x_2=0} =0$ for $j=0,1,\ldots,M$.
\end{itemize}
}
This conditions do not uniquely determine the $\psi_j$, $v_{k;j}$. They do however provide ODEs that these quantities need to satisfy. This provides a way to compute the $\psi_j$, $v_{k;j}$, at least up to some choices and arbitrary constants. The first few of these terms, as computed in \cite{carfeiz}, are:
\begin{equation}
\psi_0(x_1)=x_1,\quad \psi_1(x_1)=0
\end{equation}
\begin{equation}
\psi_2(x_1)=\frac{1}{2}(x_1-i\epsilon)^{-1}
\end{equation}
\begin{equation}
\psi_3=p_3(x_1-i\epsilon)^{-3},\quad p_3\in\C
\end{equation}
\begin{equation}
\psi_4(x_1)=-\frac{1}{8}(x_1-i\epsilon)^{-3}+\frac{9}{2}p_3^2(x_1-i\epsilon)^{-5}+p_4(x_1-i\epsilon)^{-4},\quad p_4\in\C.
\end{equation}
\begin{multline}
\psi_5(x_1)
=27p_3^3(x_1-i\epsilon)^{-7}+12p_3p_4(x_1-i\epsilon)^{-6}
\\[5pt]
 +p_5(x_1-i\epsilon)^{-5},\quad p_5\in\C.
\end{multline}
\begin{equation}
v_{0;0}=(x_1-i\epsilon)^{-\frac{1}{2}}.
\end{equation}
\begin{equation}
v_{0;1}=3p_3(x_1-i\epsilon)^{-\frac{5}{2}}+q_1(x_1-i\epsilon)^{-\frac{3}{2}},\quad q_1\in\C.
\end{equation}
Furthermore, the remainder terms can be chosen such that
\begin{equation}
||r_\tau||_{H^{k}(\mathcal{O})}=\mathscr{O}(|\tau|^{-\frac{M-1-3k}{2}}).
\end{equation}
This remainder estimate was obtained in \cite{carfeiz} with the help of results from \cite{FO}.

\section{Stationary phase as $\lambda\to\infty$}\label{stationary}

In this section we will plug the solutions of the previous section into \eqref{basic-integral-identity}, then use a stationary phase theorem in order to identify the different orders in $\lambda$ of the left hand side.

Let $\vc e_j$ be the $j^{th}$ coordinate unit vector. Let $\vc\alpha=\vc e_0+i\vc e_1$, and let the quantities $V_\tau^\pm$ be 
\begin{multline}
V_\tau^+=\tau^{-1} e^{-\tau x_0}e^{-i\tau\Psi}\nabla u_\tau^+\\[5pt]
=\chi h\Big[v_{0;0}\vc\alpha+x_2\left[v_{0;1}\vc\alpha+2iv_{0;0}\psi_2\vc e_2  \right]\\[5pt]
+x_2^2\left[v_{0;2}\vc\alpha+iv_{0;0}\dot\psi_2\vc e_1+i(2v_{0;1}\psi_2+3v_{0;0}\psi_3)\vc e_2  \right]\\[5pt]
+\tau^{-1}\left[v_{1;0}\vc\alpha +\dot v_{0;0}\vc e_1+v_{0;1}\vc e_2 \right]\\[5pt]
+x_2^3\left[v_{0;3}\vc\alpha+i(v_{0;1}\dot\psi_2+v_{0;0}\dot\psi_3)\vc e_1 +i(2v_{0;2}\psi_2+3v_{0;1}\psi_3+4v_{0;0}\psi_4)\vc e_2   \right]\\[5pt]
+\tau^{-1}x_2\left[ v_{1;1}\vc\alpha+\dot v_{0;1}\vc e_1 +(2iv_{1;0}\psi_2+2v_{0;2})\vc e_2 \right]\Big]\\[5pt]
+\tau^{-1}\left[ \delta^{-1}\dot\chi h\vc e_2+\chi\nabla''' h\right](v_{0;0}+v_{0;1}x_2)+\cdots,
\end{multline}
\begin{multline}
(-1)\overline{V_\tau^-}:=(-1)\overline{\tau^{-1} e^{\tau x_0}e^{i\tau\Psi}\nabla u_\tau^-}\\[5pt]
=\chi h\Big[\overline{v_{0;0}}\vc\alpha+x_2\left[\overline{v_{0;1}}\vc\alpha+2i\overline{v_{0;0}}\overline{\psi_2}\vc e_2  \right]\\[5pt]
+x_2^2\left[\overline{v_{0;2}}\vc\alpha+i\overline{v_{0;0}}\overline{\dot\psi_2}\vc e_1+i(2\overline{v_{0;1}}\overline{\psi_2}+3\overline{v_{0;0}}\overline{\psi_3})\vc e_2  \right]\\[5pt]
+\overline{\tau^{-1}}\left[\overline{v_{1;0}}\vc\alpha -\overline{\dot v_{0;0}}\vc e_1-\overline{v_{0;1}}\vc e_2 \right]\\[5pt]
+x_2^3\left[\overline{v_{0;3}}\vc\alpha+i(\overline{v_{0;1}}\overline{\dot\psi_2}+\overline{v_{0;0}}\overline{\dot\psi_3})\vc e_1 +i(2\overline{v_{0;2}}\overline{\psi_2}+3\overline{v_{0;1}}\overline{\psi_3}+4\overline{v_{0;0}}\overline{\psi_4})\vc e_2   \right]\\[5pt]
+\overline{\tau^{-1}}x_2\left[ \overline{v_{1;1}}\vc\alpha-\overline{\dot v_{0;1}}\vc e_1 +(2i\overline{v_{1;0}}\overline{\psi_2}-2\overline{v_{0;2}})\vc e_2 \right]\Big]\\[5pt]
-\overline{\tau^{-1}}\left[ \delta^{-1}\dot\chi h\vc e_2+\chi\nabla''' h\right](\overline{v_{0;0}}+\overline{v_{0;1}}x_2)+\cdots.
\end{multline}

In \eqref{basic-integral-identity} we choose
\begin{equation}
u_1=u_{1;\tau}^+,\quad u_2=u_{2;\tau}^+,\quad u_3=\overline{u_{3;2\tau}^-},
\end{equation}
where the numerical indices indicate that we might have different choices in the constants $p_j$,etc., and different choices of harmonic functions $h_1$, $h_2$, and $h_3$. For the various constants that appear in the expansions of our special solutions we will use a superscript to indicate the solution to which it belongs. For example $q_1^2$ will be the $q_1$ constant associated to solution 2.

For convenience, we choose $p_3^j=p_4^j=p_5^j=0$ for $j=1,2,3$. This will simplify somewhat the expressions  that  follow below.

With these choices
\begin{equation}
e^{\tau x_0}e^{i\tau\Psi_1}e^{\tau x_0}e^{i\tau\Psi_2}e^{-2\overline{\tau} x_0}e^{-2i\overline{\tau}\overline{\Psi_3}}
=e^{4i\sigma x_0}e^{i(\tau\Psi_1+\tau\Psi_2-2\overline{\tau}\overline{\Psi_3})},
\end{equation}
where we can expand
\begin{equation}
\tau\Psi_1+\tau\Psi_2-2\overline{\tau}\overline{\Psi_3}
=4i\sigma x_1+\sigma F_1+\lambda F_2,
\end{equation}
with
\begin{equation}
F_1=\frac{2ix_1}{x_1^2+\epsilon^2}x_2^2-\frac{i}{2}\frac{x_1^3-3x_1\epsilon^2}{(x_1^2+\epsilon^2)^3}x_2^4+\mathscr{O}(x_2^6),
\end{equation}
\begin{equation}
F_2=\frac{2i\epsilon}{x_1^2+\epsilon^2}x_2^2+\frac{i}{2}\frac{\epsilon^3-3x_1^2\epsilon}{(x_1^2+\epsilon^2)^3}x_2^4+\mathscr{O}(x_2^6).
\end{equation}

We have
\begin{multline}
\tau^{-2}\overline{\tau}^{-1}\frac{1}{2}B : \nabla u_1\otimes\nabla u_2\otimes\nabla u_3
\\[5pt]
=e^{i\lambda F_2}e^{4i\sigma x_0}e^{-4\sigma x_1}e^{i\sigma F_1}B :  V_{1;\tau}^+\otimes V_{2;\tau}^+\otimes\overline{V_{3;2\tau}^-}.
\end{multline}
We  quote here a stationary phase theorem, slightly addapted from the source to fit our particular circumstances.
\begin{thm}[{see \cite[Theorem 7.7.5]{H1}}]\label{H-thm}
Let $0\in I\subset \R$ be a bounded interval. Let $X$ be an open neighborhood of $\overline{I}$. If $U\in C_{0}^{2k}(I)$, $F\in C^{3k+1}(X)$ and $\im F\geq0$ in $X$, $F'(0)=0$, $ F''(0)\neq 0$, $F'\neq0$ in $I\setminus\{0\}$. Let
\begin{equation}
G(t)=F(t)-F(0)-\frac{1}{2}F''(0)t^2
\end{equation}
and
\begin{equation}
L_jU=\sum_{\nu-\mu=j}\sum_{2\nu\geq3\mu}i^{-j}\frac{1}{\nu!\mu!}\left(-\frac{1}{2F''(0)} \right)^\nu\frac{\dd^{2\nu}}{\dd t^{2\nu}}\left(G^\mu U\right)(0).  
\end{equation}
Then
\begin{equation}
\left|\int_I e^{i\lambda F(t)}U(t)-e^{i\lambda F(0)}\left( \frac{2\pi i}{\lambda F''(0)} \right)^{\frac{1}{2}}\sum_{j<k}\lambda^{-j}L_jU   \right|\leq C\lambda^{-k}||U||_{C^{2k}(I)}.
\end{equation}
\end{thm}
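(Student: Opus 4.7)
The plan is to prove Theorem \ref{H-thm} by reducing the general phase to its quadratic part near the critical point, evaluating the Gaussian model integral explicitly, then treating the cubic-and-higher remainder as a perturbation. The strategy proceeds in three stages: localize to a neighborhood of $0$; evaluate the pure quadratic model; perturb by the higher-order remainder and reindex powers of $\lambda^{-1}$.

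First, since $F'$ vanishes only at $0$, I would pick a smooth cutoff $\rho$ equal to $1$ near $0$ and supported in a small neighborhood. On the support of $(1-\rho)U$, the operator $L=(i\lambda F'(t))^{-1}\tfrac{d}{dt}$ satisfies $L(e^{i\lambda F})=e^{i\lambda F}$, so integrating by parts $2k$ times against $(1-\rho)U$ produces an $O(\lambda^{-2k}\|U\|_{C^{2k}(I)})$ contribution and can be absorbed into the error. Second, writing $a=F''(0)$ (with $\im a\geq 0$), I would establish the pure Gaussian model formula
$$\int_\R e^{i\lambda a t^2/2}V(t)\dd t \sim \left(\frac{2\pi i}{\lambda a}\right)^{1/2}\sum_{\nu\geq 0}\lambda^{-\nu}\frac{1}{\nu!}\left(-\frac{1}{2a}\right)^\nu V^{(2\nu)}(0)$$
via Plancherel: express $V$ through $\hat V$, apply the contour-shifted Gaussian identity $\int e^{i\lambda at^2/2+it\xi}\dd t=(2\pi i/\lambda a)^{1/2}e^{-i\xi^2/(2\lambda a)}$ (legitimate because $\im a\geq 0$), expand the resulting exponential in $\lambda^{-1}$, and recognize each term as a derivative of $V$ at $0$ through the inverse Fourier transform.

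Third, I would write $F(t)=F(0)+\tfrac{a}{2}t^2+G(t)$ with $G(t)=O(t^3)$, and Taylor-expand
$$e^{i\lambda G(t)} = \sum_{\mu=0}^{N-1}\frac{(i\lambda G(t))^\mu}{\mu!}+R_N(t;\lambda).$$
Applying the Gaussian model to each summand $\int e^{i\lambda a t^2/2}\rho U\cdot (i\lambda G)^\mu/\mu!\dd t$ produces a factor $\lambda^{\mu-\nu}$, and because $G^\mu U$ vanishes to order $\geq 3\mu$ at $0$ only the terms with $2\nu\geq 3\mu$ survive in the Gaussian expansion. Reindexing $j=\nu-\mu$ yields precisely the operator $L_jU$ stated in the theorem.

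The main technical obstacle is the uniform error estimate $C\lambda^{-k}\|U\|_{C^{2k}(I)}$. The guiding heuristic is that the Gaussian concentrates on $|t|\lesssim\lambda^{-1/2}$, so each additional factor of $\lambda G\sim\lambda t^3$ effectively costs $\lambda^{-1/2}$, and the constraint $2\nu\geq 3\mu$ encodes precisely which half-integer powers survive as integer powers of $\lambda^{-1}$. One must control both the Taylor truncation remainder $R_N$ (using the compact support of $U$, the bound $|G(t)|\leq C|t|^3$, and the regularity $F\in C^{3k+1}$) and the tail of the Gaussian-model expansion, choosing $N$ and the stopping index so that the total derivative count on $U$ is exactly $2k$. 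Making this bookkeeping tight, without wasting regularity, is the delicate part of the argument.
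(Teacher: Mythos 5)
This statement is not proved in the paper at all: it is quoted, with slight adaptation, from H\"ormander's book (Theorem 7.7.5 of the cited reference), so there is no internal proof to compare against. Your sketch is essentially H\"ormander's own argument --- non-stationary phase away from $t=0$, the quadratic model evaluated via Fourier transform/Plancherel and the contour-shifted Fresnel integral, then Taylor expansion of the cubic-and-higher remainder, with the constraint $2\nu\geq 3\mu$ coming from the order-$3\mu$ vanishing of $G^\mu U$ at $0$ --- and it is correct in outline. Two points deserve attention. First, a bookkeeping slip: the coefficient in the Gaussian model should be $\frac{1}{\nu!}\left(\frac{i}{2a}\right)^\nu V^{(2\nu)}(0)=i^{-\nu}\frac{1}{\nu!}\left(-\frac{1}{2a}\right)^\nu V^{(2\nu)}(0)$, not $\frac{1}{\nu!}\left(-\frac{1}{2a}\right)^\nu V^{(2\nu)}(0)$; without the factor $i^{-\nu}$ your reindexing $j=\nu-\mu$ would not reproduce the factor $i^{-j}$ appearing in $L_j$. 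Second, since $F$ is complex-valued with only $\im F\geq 0$, there is no sign information on $\im G$, so the naive remainder bound $\left|e^{i\lambda G}-\sum_{\mu<N}(i\lambda G)^\mu/\mu!\right|\leq |\lambda G|^N/N!$ is not directly available; the truncation error must be estimated keeping the globally bounded factor $e^{i\lambda F}$ intact, using $\im F\geq 0$ together with the Gaussian concentration on $|t|\lesssim\lambda^{-1/2}$, and likewise the Fresnel identity for complex $a=F''(0)$ with $\im a\geq 0$ requires the correct branch of the square root. These are precisely the delicate points you flag at the end, and they are carried out in the cited reference, so as a reconstruction of the quoted theorem your plan is sound.
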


With the following definitions, this theorem can be applied to the integral in the $x_2$ variable part of \eqref{basic-integral-identity}. We define
\begin{equation}
U=(-1)e^{i\sigma F_1}B :  V_{1;\tau}^+\otimes V_{2;\tau}^+\otimes\overline{V_{3;2\tau}^-},\quad F(x_2)=F_2(x_2),
\end{equation}
which then gives
\begin{equation}
G(x_2)=F_2(x_2)-F_2(0)-\frac{1}{2}F_2''(0)x_2^2=\frac{i}{2}\frac{\epsilon^3-3x_1^2\epsilon}{(x_1^2+\epsilon^2)^3}x_2^4+\mathscr{O}(x_2^6).
\end{equation}
Here we treat $F_2$ as a function of $x_2$.
With this $G$ we have
\begin{equation}
L_0(U)=U|_{x_2=0},
\end{equation}
\begin{equation}
L_1(U)=\frac{1}{8\epsilon}\left[(x_1^2+\epsilon^2)\pr_2^2U+\frac{1}{8}\frac{3x_1^2-\epsilon^2}{x_1^2+\epsilon^2}U    \right]_{x_2=0},
\end{equation}
and
\begin{equation}
L_2(U)=\frac{1}{128\epsilon^2}\left[(x_1^2+\epsilon)^2\pr_2^4U
+\frac{15(3x_1^2-\epsilon^2)}{2}\pr_2^2U
+\frac{105(3x_1^2-\epsilon^2)^2}{16(x_1^2+\epsilon^2)^2}U  \right]_{x_2=0}.
\end{equation}
Theorem \ref{H-thm} gives  that
\begin{multline}\label{hormander}
\tau^{-2}\overline{\tau}^{-1}\frac{(-1)}{2}\int B : \nabla u_1\otimes\nabla u_2\otimes\nabla u_3\dd x_2\\[5pt]
=\left(\pi \frac{x_1^2+\epsilon^2}{2\lambda\epsilon} \right)^{\frac{1}{2}}e^{4i\sigma x_0}e^{-4\sigma x_1}\left[L_0(U)+\lambda^{-1}L_1(U) +\lambda^{-2}L_2(U) \right]\\[5pt]+\mathscr{O}(\lambda^{-\frac{7}{2}}).
\end{multline}

\section{Expansion of \eqref{hormander}}\label{expansion}

Here we will collect the first few terms in the expansion in $\lambda^{-1}$ of \eqref{hormander}, or at least parts of these terms that can be isolated  by independently varying the arbitrary parameters $q_1^j$. We do it by computing coefficients in the expansion in $\lambda^{-1}$ and $x_2$ of $-e^{-i\sigma F_1}U=B :  V_{1;\tau}^+\otimes V_{2;\tau}^+\otimes\overline{V_{3;2\tau}^-}$. Clearly the $\lambda^0x_2^0$ term of $B :  V_{1;\tau}^+\otimes V_{2;\tau}^+\otimes\overline{V_{3;2\tau}^-}$, which is 
\begin{equation}
\chi^3h_1h_2h_3v_{0;0}|v_{0;0}|^2 B:\vc\alpha\otimes\vc\alpha\otimes\vc\alpha,
\end{equation}
 vanishes by symmetry. 

\subsection{The $\lambda^{-\frac{3}{2}}q_1^j$ term in \eqref{hormander}}

The $\lambda^{-1}x_2^0$ and $\lambda^0 x_2^2$ terms in $B :  V_{1;\tau}^+\otimes V_{2;\tau}^+\otimes\overline{V_{3;2\tau}^-}$ contribute to this. In fact, we will only keep the terms that also contain a factor of $q_1^j$, $j=1,2,3$. Since these constants can be varied independently, the expression we will end up with will also vanish.

The coefficient of $\lambda^{-1}x_2^0$ is
\begin{multline}
\chi^3h_1h_2h_3 B:\Big(
\dot v_{0;0}|v_{0;0}|^2\vc e_1\otimes\vc\alpha\otimes\vc\alpha
+v_{0;1}^1|v_{0;0}|^2 \vc e_2\otimes\vc\alpha\otimes\vc\alpha\\[5pt]
+\dot v_{0;0}|v_{0;0}|^2\vc\alpha\otimes\vc e_1\otimes\vc\alpha
+v_{0;1}^2|v_{0;0}|^2\vc\alpha\otimes\vc e_2\otimes\vc\alpha
\Big),
\end{multline}
with other  terms being zero by reason of symmetry.

The the coefficient of $\lambda^0x_2^2$ contains terms that can be obtained in two different ways, which we label ``$x_2^2\times 1\times 1$'' and ``$x_2\times x_2\times 1$''.

 The ``$x_2^2\times 1\times 1$''terms are
\begin{multline}
\chi^3h_1h_2h_3 B:\Big(
iv_{0;0}|v_{0;0}|^2\dot\psi_2\vc e_1\otimes\vc\alpha\otimes\vc\alpha
+2i v_{0;1}^1\psi_2|v_{0;0}|^2\vc e_2\otimes\vc\alpha\otimes\vc\alpha\\[5pt]
+iv_{0;0}|v_{0;0}|^2\dot\psi_2\vc\alpha\otimes\vc e_1\otimes\vc\alpha
+2i v_{0;1}^2\psi_2|v_{0;0}|^2\vc\alpha\otimes\vc e_2\otimes\vc\alpha
\Big)\\[5pt]
=\chi^3h_1h_2h_3 B:\Big(
2i v_{0;1}^1\psi_2|v_{0;0}|^2\vc e_2\otimes\vc\alpha\otimes\vc\alpha
+2i v_{0;1}^2\psi_2|v_{0;0}|^2\vc\alpha\otimes\vc e_2\otimes\vc\alpha
\Big).
\end{multline}

The ``$x_2\times x_2\times 1$'' terms are
\begin{multline}
\chi^3h_1h_2h_3 B:\Big(
2i\psi_2|v_{0;0}|^2v_{0;1}^1\vc\alpha\otimes\vc e_2\otimes\vc\alpha
+2i\psi_2|v_{0;0}|^2 v_{0;1}^2 \vc e_2\otimes\vc\alpha\otimes\vc\alpha\\[5pt]
+2i \psi_2(v_{0;0})^2\overline{v_{0;1}^3}\vc e_2\otimes\vc\alpha\otimes\vc\alpha
+2i \psi_2(v_{0;0})^2\overline{v_{0;1}^3}\vc\alpha\otimes\vc e_2\otimes\vc\alpha
\Big)\\[5pt]
=\chi^3h_1h_2h_3 B:\Big(
2i\psi_2|v_{0;0}|^2v_{0;1}^1\vc\alpha\otimes\vc e_2\otimes\vc\alpha
+2i\psi_2|v_{0;0}|^2 v_{0;1}^2 \vc e_2\otimes\vc\alpha\otimes\vc\alpha
\Big).
\end{multline}

Adding these two we see that the coefficient of $\lambda^0x_2^2$ in $B :  V_{1;\tau}^+\otimes V_{2;\tau}^+\otimes\overline{V_{3;2\tau}^-}$ is zero.

In order to compute the $\lambda^0$ order term in $L_1(U)$ we also need to account for two terms containing derivatives of $B$. The first one is
\begin{multline}
\left.\pr_2^2(e^{i\sigma F_1}B) : V_{1;\tau}\otimes V_{2;\tau}\otimes\overline{V}_{3;-2\tau}\right|_{ x_2=0}\\[5pt]
=v_{0;0}|v_{0;0}|^2\left.h_1h_2h_3\pr_2^2(e^{i\sigma F_1}B):\vc\alpha\otimes\vc\alpha\otimes\vc\alpha\right|_{ x_2=0}+\mathscr{O}(\lambda^{-1})\\[5pt]=\mathscr{O}(\lambda^{-1}).
\end{multline}
The  second one is
\begin{multline}
\left.\pr_2(e^{i\sigma F_1}B) : \pr_2(V_{1;\tau}\otimes V_{2;\tau}\otimes\overline{V}_{3;-2\tau})\right|_{ x_2=0}
=\left.h_1h_2h_3\pr_2(e^{i\sigma F_1}B)\right|_{ x_2=0}\\[5pt]:\Big[
(v_{0;1}^1|v_{0;0}|^2+v_{0;1}^2|v_{0;0}|^2+\overline{v_{0;1}^3}(v_{0;0})^2)\vc\alpha\otimes\vc\alpha\otimes\vc\alpha\\[5pt]
+2i v_{0;0}|v_{0;0}|^2(\psi_2\vc e_2\otimes\vc\alpha\otimes\vc\alpha+
\psi_2\vc\alpha\otimes\vc e_2\otimes\vc\alpha+
\overline{\psi}_2\vc\alpha\otimes\vc\alpha\otimes\vc e_2)
\Big]\\[5pt]+\mathscr{O}(\lambda^{-1})=\mathscr{O}(\lambda^{-1}).
\end{multline}
Both  contribute nothing to the computation.

Finally, the $\lambda^{-1}$ term in  $L_0(U)$ may contain terms involving $[\delta^{-1}\dot\chi h_j \vc e_2+\chi\nabla''' h_j]$. However, none of these will contain a factor $q_1^j$, so they contribute nothing to the computation.

From the $\lambda^{-\frac{3}{2}}q_1^1$ term in \eqref{hormander} we then get
\begin{multline}
0
=\int e^{-4\sigma x_1}\hat B(4\sigma,x_1,0,\vc x''') : \vc e_2\otimes\vc\alpha\otimes\vc\alpha \\[5pt] \times (x_1-i\epsilon)^{-\frac{3}{2}}h_1(\vc x''')h_2(\vc x''')h_3(\vc x''')\dd x_1\dd\vc x'''.
\end{multline}
Here, by $\hat B$ we mean the Fourier transform of $B$ only in the $x_0$ variable.
 
As we have mentioned at the beginning of this paper, the span of products of at least two harmonic functions is dense. This allows us to remove the $\dd\vc x'''$ integral to obtain that
\begin{equation}
\int e^{-4\sigma x_1}\hat B(4\sigma,x_1,0,\vc x''') : \vc e_2\otimes\vc\alpha\otimes\vc\alpha \\[5pt] \times (x_1-i\epsilon)^{-\frac{3}{2}}\dd x_1=0.
\end{equation}

The following lemma also appears in \cite{carfeiz}. 
\begin{lem}\label{jacobi}
Let $I$ be a bounded interval in $\R$ that does not contain the origin, $\mu>0$, and $\epsilon_0>0$. Let $f \in L^2(I)$ be such that 
\begin{equation}
\int_I f(t) (t-i\epsilon)^{-\mu}\dd t=0,\quad\forall \epsilon\in(0,\epsilon_0).
\end{equation}
Then $f= 0$.
\end{lem}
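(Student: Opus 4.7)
The plan is to reduce the hypothesis to an infinite family of vanishing ``fractional moments'' of $f$ and then invoke the Weierstrass approximation theorem. Since $I$ is a single bounded interval that avoids the origin, we may assume without loss of generality that $I\subset(0,\infty)$; the case $I\subset(-\infty,0)$ is entirely symmetric.

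First I would differentiate
\begin{equation}
G(\epsilon):=\int_I f(t)(t-i\epsilon)^{-\mu}\dd t
\end{equation}
in $\epsilon$ under the integral sign (using the principal branch of the power). This is legitimate because $|t-i\epsilon|\geq\mathrm{dist}(0,I)>0$ for all $t\in I$ and all sufficiently small $\epsilon\geq 0$, so the integrand and all its $\epsilon$-derivatives are uniformly bounded on $I$. Differentiating $k$ times yields a nonzero constant times $\int_I f(t)(t-i\epsilon)^{-\mu-k}\dd t$, and the hypothesis $G\equiv 0$ on $(0,\epsilon_0)$ forces each of these integrals to vanish there as well. A dominated-convergence limit $\epsilon\to 0^+$ then converts the hypothesis into
\begin{equation}
\int_I f(t)\,t^{-\mu-k}\dd t=0,\qquad k=0,1,2,\ldots.
\end{equation}

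Next I would apply the change of variable $s=1/t$, which maps $I\subset(0,\infty)$ diffeomorphically to a bounded interval $J$ bounded away from both $0$ and $\infty$. The identities above rewrite as $\int_J g(s)\,s^k\dd s=0$ for every $k\geq 0$, where $g(s):=f(1/s)\,s^{\mu-2}$ belongs to $L^2(J)$. Because polynomials are dense in $L^2(J)$ on this bounded interval (by Weierstrass), $g$ is the zero element of $L^2(J)$, whence $f=0$ a.e.\ on $I$.

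I do not foresee any serious obstacle: the only minor points of care are fixing a branch of $(t-i\epsilon)^{-\mu}$ (the principal branch suffices, since $t-i\epsilon$ stays in a fixed half-plane bounded away from $0$ for $t\in I$ and small $\epsilon\geq 0$) and justifying the passage to the limit $\epsilon\to 0^+$ inside the integral, both of which follow from the uniform bound on $|t-i\epsilon|^{-1}$. The essential content is a fractional-exponent variant of the classical Hausdorff moment argument.
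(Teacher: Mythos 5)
Your proof is correct and takes essentially the same route as the paper: both arguments reduce the hypothesis to the vanishing of the fractional moments $\int_I f(t)\,t^{-\mu-k}\,\mathrm{d}t$ for all $k\ge 0$ (the paper by expanding $(t-i\epsilon)^{-\mu}$ as a binomial series in $\epsilon$ and equating coefficients, you by differentiating $k$ times in $\epsilon$ and letting $\epsilon\to 0^+$), and then conclude by a density argument in $L^2(I)$. Your substitution $s=1/t$ together with Weierstrass merely supplies the justification of the density of $\mathrm{span}\{t^{-k}\}_{k\ge 0}$ in $L^2(I)$ that the paper asserts without proof.
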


\begin{proof}
We have  $\forall \epsilon\in(0,\epsilon_0)$
\begin{multline}
0=\int_I f(t)(t-i\epsilon)^{-\mu}\dd t\\[5pt]
=\sum_{k=0}^\infty(i\epsilon)^k\frac{\mu(\mu+1)\cdots(\mu+k-1)}{k!}\int_I f(t)t^{-\mu} t^{-k}\dd t.
\end{multline}
It follows that for all $k=0,1,\ldots$
\begin{equation}
\int_I f(t)t^{-\mu} t^{-k}\dd t=0,
\end{equation}
so $f=0$, since $\mathrm{span}\,\{t^{-k}\}_{k=0}^\infty$ is dense in $L^2(I)$.
\end{proof}

Applying Lemma \ref{jacobi}, and undoing the Fourier transform in the first variable, we have that
\begin{equation}
B(x_0,x_1,0,\vc x''')=0,\quad \forall (x_0,x_1,0,\vc x''')\in\mathcal{O}.
\end{equation}
Since we are free to translate the origin of the coordinate system in the $x_2$ direction without affecting any of the above arguments, it follows that
\begin{equation}\label{three-two}
B(\vc x):\vc e_2\otimes\vc\alpha\otimes\vc\alpha=0,\quad\forall \vc x\in\mathcal{O}.
\end{equation}
Taking real and imaginary parts we get
\begin{equation}\label{re1}
B(\vc x):\vc e_2\otimes\vc e_0\otimes\vc e_0=B(\vc x):\vc e_2\otimes\vc e_1\otimes\vc e_1, \quad\forall \vc x\in\mathcal{O},
\end{equation}
\begin{equation}\label{im1}
B(\vc x):\vc e_2\otimes\vc e_0\otimes\vc e_1=-B(\vc x):\vc e_2\otimes\vc e_1\otimes\vc e_0, \quad\forall \vc x\in\mathcal{O}.
\end{equation}
Note that \eqref{im1} implies that $B$ is totally antisymmetric in all \emph{different} indices.

\subsection{The $\lambda^{-\frac{5}{2}}q_1^j$ term in \eqref{hormander}}

Let 
\begin{equation}
A_{jkl}=\frac{1}{3}\left(B_{jkl}+B_{klj}+B_{ljk}  \right).
\end{equation}
$A$ is fully antisymmetric, as for example
\begin{equation}
A_{jlk}=\frac{1}{3}\left(B_{jlk}+B_{lkj}+B_{kjl}  \right)
=\frac{1}{3}\left(-B_{ljk}-B_{klj}-B_{jkl}  \right)=-A_{jkl}, \; etc.
\end{equation}
As above we have that
\begin{multline}
\int A(\vc x):\nabla u_1(\vc x)\otimes\nabla u_2(\vc x)\otimes\nabla u_3(\vc x)\dd\vc x\\[5pt]
=\frac{1}{3}\int B(\vc x):\nabla u_1(\vc x)\otimes\nabla u_2(\vc x)\otimes\nabla u_3(\vc x)\dd\vc x\\[5pt]
+\frac{1}{3}\int B(\vc x):\nabla u_2(\vc x)\otimes\nabla u_3(\vc x)\otimes\nabla u_1(\vc x)\dd\vc x\\[5pt]
+\frac{1}{3}\int B(\vc x):\nabla u_3(\vc x)\otimes\nabla u_1(\vc x)\otimes\nabla u_2(\vc x)\dd\vc x
=0,
\end{multline}
for all  functions $u_1$,$u_2$, $u_3$, which are harmonic in $\mathcal{O}$.
The tensor $A$ is the antisymmetric part of $B$. In this subsection we will show that it is in fact zero.

Before proceeding with the computations, we remark that we can choose $h_1=1$. This will not affect our ability to remove the integration in $\vc x'''$ since the span of the products $h_2h_3$ is still dense. The effect of this choice will be that the $\delta^{-1}\dot\chi h_1\vc e_2+\chi\nabla'''h_1$ term will have no contribution once we impose the condition $x_2=0$.

The $\lambda^{-2}x_2^0$, $\lambda^{-1} x_2^2$, and $\lambda^0 x_2^4$ terms in $A :  V_{1;\tau}^+\otimes V_{2;\tau}^+\otimes\overline{V_{3;2\tau}^-}$ contribute to the $\lambda^{-\frac{5}{2}}$ order in \eqref{hormander}. Here we will only collect the terms that contain a $q_1^1$ factor.

The relevant coefficient of $\lambda^{-2}x_2^0$ in $A :  V_{1;\tau}^+\otimes V_{2;\tau}^+\otimes\overline{V_{3;2\tau}^-}$ is
\begin{multline}
\chi^3h_2h_3 A:\Big(
\dot v_{0;0}v_{0;1}^1\overline{v_{0;0}}\vc e_2\otimes\vc e_1\otimes\vc\alpha
-\frac{1}{2}\overline{\dot v_{0;0}}{v_{0;1}^1} v_{0;0}\vc e_2\otimes\vc\alpha\otimes\vc e_1
\Big)\\[5pt]
=\frac{q_1^1}{2}(x_1^2+\epsilon^2)^{-\frac{1}{2}}(x_1-i\epsilon)^{-\frac{3}{2}}\Big((x_1-i\epsilon)^{-1}+\frac{1}{2}(x_1+i\epsilon)^{-1}  \Big)\\[5pt]
\times\chi^3h_2h_3 A:\vc e_1\otimes\vc e_2\otimes\vc\alpha.
\end{multline}

The contributions to the coefficient of $\lambda^{-1}x_2^2$ can be obtained in the following ways: ``$\lambda^{-1}x_2^2\times 1\times 1$'', ``$\lambda^{-1}\times x_2^2\times 1$'', ``$\lambda^{-1}\times x_2\times x_2$'', and ``$\lambda^{-1}x_2\times x_2\times 1$''. The first of these, ``$\lambda^{-1}x_2^2\times 1\times 1$'', contributes no relevant terms.

The relevant terms contributed by ``$\lambda^{-1}\times x_2^2\times 1$'' are
\begin{multline}
\chi^3h_2h_3 A:\Big(
i|v_{0;0}|^2v_{0;1}^1\dot \psi_2\vc e_2\otimes\vc e_1\otimes\vc\alpha
+i|v_{0;0}|^2v_{0;1}^1\overline{\dot \psi}_2\vc e_2\otimes\vc\alpha\otimes\vc e_1\\[5pt]
+2i\dot v_{0;0}v_{0;1}^1\psi_2\overline{v_{0;0}}\vc e_2\otimes\vc e_1\otimes\vc\alpha
-i\overline{\dot v_{0;0}}v_{0;1}^1\psi_2{v_{0;0}}\vc e_2\otimes\vc \alpha\otimes\vc e_1
\Big)\\[5pt]
=\chi^3h_2h_3 A:\Big(
2i|v_{0;0}|^2v_{0;1}^1\dot \psi_2\vc e_2\otimes\vc e_1\otimes\vc\alpha
+i|v_{0;0}|^2v_{0;1}^1\overline{\dot \psi}_2\vc e_2\otimes\vc\alpha\otimes\vc e_1\\[5pt]
-i\overline{\dot v_{0;0}}v_{0;1}^1\psi_2{v_{0;0}}\vc e_2\otimes\vc \alpha\otimes\vc e_1
\Big).
\end{multline}
The relevant terms contributed by ``$\lambda^{-1}\times x_2\times x_2$'' are
\begin{equation}
\chi^3h_2h_3 A:\Big(
2i\dot v_{0;0}v_{0;1}^1\overline{v_{0;0}}\overline{\psi}_2\vc\alpha\otimes\vc e_1\otimes\vc e_2
-i\overline{\dot v_{0;0}}v_{0;1}^1 v_{0;0}\psi_2 \vc\alpha\otimes\vc e_2\otimes\vc e_1
\Big).
\end{equation}
The relevant terms contributed by ``$\lambda^{-1}x_2\times x_2\times 1$'' are
\begin{multline}
\chi^3h_2h_3 A:\Big(
2i|v_{0;0}|^2\psi_2\dot v_{0;1}^1\vc e_1\otimes\vc e_2\otimes\vc \alpha
+2i|v_{0;0}|^2\overline{\psi}_2\dot v_{0;1}^1\vc e_1\otimes\vc\alpha\otimes\vc e_2
\Big)\\[5pt]
=\chi^3h_2h_3 
2i|v_{0;0}|^2(\psi_2-\overline{\psi}_2)\dot v_{0;1}^1A:\vc e_1\otimes\vc e_2\otimes\vc \alpha.
\end{multline}
From the above we see that  the relevant coefficient of $\lambda^{-1}x_2^2$ in  $A :  V_{1;\tau}^+\otimes V_{2;\tau}^+\otimes\overline{V_{3;2\tau}^-}$ is
\begin{equation}
iq_1^1\epsilon^2(x_1^2+\epsilon^2)^{-\frac{5}{2}}(x_1-i\epsilon)^{-\frac{3}{2}}\chi^3h_2h_3 A:\vc e_1\otimes\vc e_2\otimes\vc\alpha.
\end{equation}

The contributions to the coefficient of $\lambda^{0}x_2^4$ can be obtained in the following ways: ``$x_2^4\times 1\times 1$'', ``$x_2^2\times x_2^2\times 1$'', ``$x_2^2\times x_2\times x_2$'', and ``$x_2^3\times x_2\times 1$''. The first of these, ``$x_2^4\times 1\times 1$'', contributes no relevant terms.

The relevant terms contributed by ``$x_2^2\times x_2^2\times 1$'' are
\begin{multline}
(-2)\chi^3h_2h_3 A:\Big(
|v_{0;0}|^2\psi_2\dot\psi_2 v_{0;1}^1\vc e_2\otimes\vc e_1\otimes\vc\alpha
+|v_{0;0}|^2\psi_2\overline{\dot\psi_2} v_{0;1}^1\vc e_2\otimes\vc\alpha\otimes\vc e_1
\Big)\\[5pt]
=(-2)\chi^3h_2h_3 
|v_{0;0}|^2\psi_2(\overline{\dot\psi}_2-\dot\psi_2) v_{0;1}^1A:\vc e_1\otimes\vc e_2\otimes\vc\alpha.
\end{multline}
The relevant terms contributed by ``$x_2^2\times x_2\times x_2$'' are
\begin{multline}
(-2)\chi^3h_2h_3 A:\Big(
|v_{0;0}|^2\dot\psi_2\overline{\psi}_2v_{0;1}^1\vc\alpha \otimes\vc e_1\otimes\vc e_2
+|v_{0;0}|^2\overline{\dot\psi}_2\psi_2 v_{0;1}^1 \vc\alpha\otimes\vc e_2\otimes\vc e_1
\Big)\\[5pt]
=(-2)\chi^3h_2h_3
|v_{0;0}|^2(\dot\psi_2\overline{\psi}_2-\overline{\dot\psi}_2\psi_2)v_{0;1}^1 A:\vc e_1 \otimes\vc e_2\otimes\vc\alpha.
\end{multline}
The relevant terms contributed by ``$x_2^3\times x_2\times 1$'' are
\begin{multline}
(-2)\chi^3h_2h_3 A:\Big(
|v_{0;0}|^2\dot\psi_2\psi_2 v_{0;1}^1\vc e_1\otimes\vc e_2\otimes\vc\alpha
+|v_{0;0}|^2\dot\psi_2\overline{\psi}_2v_{0;1}^1\vc e_1\otimes\vc\alpha\otimes\vc e_2
\Big)\\[5pt]
=(-2)\chi^3h_2h_3 
|v_{0;0}|^2\dot\psi_2(\psi_2-\overline{\psi}_2) v_{0;1}^1A:\vc e_1\otimes\vc e_2\otimes\vc\alpha.
\end{multline}
From the above we see that  the coefficient of $\lambda^{0}x_2^4$ in  $A :  V_{1;\tau}^+\otimes V_{2;\tau}^+\otimes\overline{V_{3;2\tau}^-}$  is \emph{zero}.

Note  that there are several terms of order $\lambda^{-2}$ in $L_0(U)+\lambda^{-1}L_1(U)+\lambda^{-2}L_2(U)$ that involve derivatives of $A$. However, none contain a factor of $q_1^j$.

We can now determine the coefficient of $\lambda^{-2}q_1^1$ in $L_0(U)+\lambda^{-1}L_1(U)+\lambda^{-2}L_2(U)$.  After some grouping of terms we find it to be
\begin{multline}
\frac{1}{8}(x_2^2+\epsilon^2)^{-\frac{1}{2}}(x_1-i\epsilon)^{-\frac{3}{2}}\left[5(x_1-i\epsilon)^{-1}+(x_1+i\epsilon)^{-1}  \right]\\[5pt]\times A|_{x_2=0}:\vc e_1\otimes\vc e_2\otimes\vc\alpha.
\end{multline}
It follows then  that
\begin{multline}
\int e^{-4\sigma x_1}\hat A(4\sigma,x_1,0,\vc x''') : \vc e_1\otimes\vc e_2\otimes\vc\alpha\\[5pt]
\times(x_1-i\epsilon)^{-\frac{3}{2}}\left[5(x_1-i\epsilon)^{-1}+(x_1+i\epsilon)^{-1}  \right]\\[5pt]
\times h_1(\vc x''') h_2(\vc x''') h_3(\vc x''')\dd x_1\dd\vc x'''=0
\end{multline}
The only thing needed in order to conclude that $A=0$ is an analogue to Lemma \ref{jacobi}, so that we may handle the integration in $x_1$. 
\begin{lem}\label{jacobi2}
Let $I$ be a bounded interval in $\R$ that does not contain the origin,  and let $\epsilon_0>0$. Let $f \in L^2(I)$ be such that 
\begin{equation}
\int_I f(t) (t-i\epsilon)^{-\frac{3}{2}}
\left[5(t-i\epsilon)^{-1}+(t+i\epsilon)^{-1}  \right]\dd t=0,\quad\forall \epsilon\in(0,\epsilon_0).
\end{equation}
Then $f= 0$.
\end{lem}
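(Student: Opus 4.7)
The plan is to mirror the proof of Lemma \ref{jacobi}: expand the bracketed weight as a convergent power series in $\epsilon$ whose coefficients are constant multiples of the moments $\int_I f(t)t^{-5/2-k}\dd t$, show that every numerical coefficient in this expansion is nonzero, and then invoke the density of $\spn\{t^{-k}\}_{k\geq 0}$ in $L^2(I)$. To set this up, observe that
\begin{equation}
W_\epsilon(t):=(t-i\epsilon)^{-\frac{3}{2}}\!\left[5(t-i\epsilon)^{-1}+(t+i\epsilon)^{-1}\right]
=\frac{6t+4i\epsilon}{(t-i\epsilon)^{\frac{3}{2}}(t^2+\epsilon^2)},
\end{equation}
and that with $z=i\epsilon/t$ we have $W_\epsilon(t)=t^{-5/2}\Phi(z)$, where
\begin{equation}
\Phi(z):=\frac{6+4z}{(1-z)^{\frac{5}{2}}(1+z)}.
\end{equation}
Since $I$ is bounded and bounded away from $0$, we have $|z|<1$ uniformly on $I$ once $\epsilon_0$ is taken smaller than $\inf_I|t|$. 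The Taylor expansion $\Phi(z)=\sum_{k\geq0}d_kz^k$, valid on $|z|<1$, then yields the absolutely and uniformly convergent expansion $W_\epsilon(t)=\sum_{k\geq 0}d_k(i\epsilon)^kt^{-5/2-k}$ on $I\times(0,\epsilon_0)$.

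The key numerical claim is that $d_k\neq 0$ for every $k\geq 0$. Writing $h(z):=(1-z)^{-5/2}(1+z)^{-1}=\sum \gamma_kz^k$, we have $d_k=6\gamma_k+4\gamma_{k-1}$ with the convention $\gamma_{-1}=0$. The elementary identity $(1-z^2)h(z)=(1-z)^{-3/2}$ translates into the two-step recursion
\begin{equation}
\gamma_k-\gamma_{k-2}=\frac{(3/2)_k}{k!}>0,\qquad \gamma_0=1,\;\gamma_1=\tfrac{3}{2},
\end{equation}
and a straightforward induction gives $\gamma_k>0$ for every $k\geq 0$; hence $d_k=6\gamma_k+4\gamma_{k-1}>0$. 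I expect this positivity step to be the only real obstacle: the $(1+z)^{-1}$ factor would produce alternating signs in a naive binomial expansion of $\Phi$, and it is the factorization $(1-z^2)h(z)=(1-z)^{-3/2}$ that forces the inductive positivity.

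With $d_k\neq 0$ established, substituting the series for $W_\epsilon$ into the hypothesis and exchanging sum and integral yields
\begin{equation}
\sum_{k=0}^\infty d_k(i\epsilon)^k\int_If(t)t^{-\frac{5}{2}-k}\dd t=0,\qquad \forall\,\epsilon\in(0,\epsilon_0).
\end{equation}
By the identity theorem for power series each coefficient vanishes, so $\int_If(t)t^{-5/2-k}\dd t=0$ for every $k\geq 0$. Setting $g(t):=f(t)t^{-5/2}\in L^2(I)$, this reads $\int_Ig(t)t^{-k}\dd t=0$ for all $k\geq 0$, and the density of $\spn\{t^{-k}\}_{k\geq 0}$ in $L^2(I)$ used already in Lemma \ref{jacobi} forces $g=0$, hence $f=0$.
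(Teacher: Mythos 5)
Your proof is correct and takes essentially the same route as the paper: both reduce the lemma to showing that all Taylor coefficients at $z=0$ of $(1-z)^{-3/2}\left[5(1-z)^{-1}+(1+z)^{-1}\right]=(6+4z)(1-z)^{-5/2}(1+z)^{-1}$ are nonzero, and then reuse the $\epsilon$-expansion, moment, and density argument of Lemma \ref{jacobi}. The only difference is how positivity is checked: the paper writes the $j$-th coefficient as the manifestly positive Cauchy-product sum $\sum_{l=0}^{j}\left[5+(-1)^{j-l}\right]\frac{(2l+1)!!}{2^{l}l!}$, while you derive it from the recursion $\gamma_k-\gamma_{k-2}=\frac{(3/2)_k}{k!}$ coming from $(1-z^2)h(z)=(1-z)^{-3/2}$ — a cosmetic variation.
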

\begin{proof}
As before, we only need to show that the Taylor expansion at $z=0$ of the function $(1-z)^{-\frac{3}{2}}\left[5(1-z)^{-1}+(1+z)^{-1}  \right]$ does not contain any zero coefficients. In fact, the coefficient of order $j$ is
\begin{equation}
\sum_{l=0}^j \left[5+(-1)^{j-l}  \right]\frac{(2l+1)!!}{2^ll!}>0.
\end{equation}
\end{proof}

In the same way that we have arrived at \eqref{three-two}, we can now  conclude that $A=0$.
It follows that $B_{jkl}=0$ when $j\neq k\neq l\neq j$, since these coefficients are already fully antisymmetric, as we have already pointed out following equation \eqref{im1}.

The only possibly non-zero coefficients remaining are $b_j=B_{jkk}=-B_{kjk}\in C_0^\infty(\mathcal{O})$, whith $j\neq k$. Then $B$ has the following structure
\begin{equation}
B_{jkl}=b_j\delta_{kl}(1-\delta_{jk})-b_k\delta_{jl}(1-\delta_{jk})
=b_j\delta_{kl}-b_k\delta_{jl}.
\end{equation}
This structure is sufficiently simple that we can use the harmonic functions constructed by Calder\'on in \eqref{basic-integral-identity}.
Let 
\begin{equation}
u_1=u_3=e^{i\vc\zeta_{+}\cdot \vc x},\quad u_2=e^{2i\vc\zeta_{-}\cdot \vc x}.
\end{equation}
Then \eqref{basic-integral-identity} gives that
\begin{equation}\label{calderon1}
|\vc\xi|^2\hat{\vc b}(\vc\xi)\cdot\vc\zeta_+=0.
\end{equation}
Here $\hat{\vc b}$ denotes the Fourier transform of $\vc b$ in all variables.
Similarly, if we make the choices
\begin{equation}
u_1=u_3=e^{i\vc\zeta_{-}\cdot \vc x},\quad u_2=e^{2i\vc\zeta_{+}\cdot \vc x},
\end{equation}
we obtain
\begin{equation}\label{calderon2}
|\vc\xi|^2\hat{\vc b}(\vc\xi)\cdot\vc\zeta_-=0.
\end{equation}
Adding \eqref{calderon1} and \eqref{calderon2} we get
\begin{equation}
|\vc\xi|^2\hat{\vc b}(\vc\xi)\cdot\vc\xi=0.
\end{equation}
Subtracting the two we get
\begin{equation}
|\vc\xi|^3\hat{\vc b}(\vc\xi)\cdot\vc\nu=0.
\end{equation}
Since $\vc\nu$ can be any unit vector orthogonal to $\vc\xi$, it follows that $\hat{\vc b}=0$, so $\vc b=0$. Therefore $B=0$.

\section{An application to inverse problems for quasilinear systems}\label{application}

In order to show the applicability of our result, we will here consider an application to an inverse boundary value problem for an elliptic quasilinear system, with ``anisotropic'' nonlinearity. This is not necessarily meant to model a particular kind of physical behavior, but rather to suggest the kind of applications that our density result may have.

To be precise, consider the following system of two coupled equations in a bounded, $C^{1,1}$ domain $\dom\subset\R^{1+n}$
\begin{equation}\label{sys}
\left\{\begin{array}{l} -\Delta u^J+\sum_{K=1}^2\nabla(A^{JK}:\nabla u^J\otimes\nabla u^K)=0,\quad J=1,2,\\[5pt] (u^1,u^2)|_{\pr\dom}=(f^1,f^2).\end{array}\right.
\end{equation}
Each $A^{JK}=(A^{JK}_{jkl})_{j,k,l=0}^n$ is a 3-tensor which we assume to be $C^\infty(\dom)$. The $A^{JJ}$ tensors would be symmetric in the last two indices, but the $A^{JK}$, $J\neq K$, do not a priori need to have any symmetry at all.

First we need to discuss the forward problem. What follows is a standard contraction principle argument. We sketch it out for the sake of completeness. Let $p\in(1+n,\infty)$. For $f^J\in W^{2-\frac{1}{p},p}(\pr\dom)$, let $v^J_0\in W^{2,p}(\dom)$ be the solution to the problem
\begin{equation}
\left\{\begin{array}{l}\Delta v^J_0=0,\quad J=1,2,\\[5pt] v^J_0|_{\pr\dom}=f^J.\end{array}\right.
\end{equation}
We have that
\begin{equation}
||v^J_0||_{W^{2,p}(\dom)}\leq C||f^J||_{W^{2-\frac{1}{p},p}(\pr\dom)},\quad J=1,2.
\end{equation}
Let $G_0: L^p(\dom)\to W^{2,p}(\dom)\cap W^{1,p}_0(\dom)$ be the solution operator to the equation $\Delta u=F$, with zero Dirichlet boundary conditions. This is a bounded linear operator. We define the map
\begin{equation}
[\mathcal{T}(v^1,v^2)]^J=v^J_0-G_0\left(\sum_{K=1}^2\nabla(A^{JK}:\nabla v^J\otimes\nabla v^K)  \right),\quad J=1,2.
\end{equation}
It is easy to check, using Sobolev embedding,  that $\mathcal{T}:W^{2,p}(\dom)\times W^{2,p}(\dom)\to W^{2,p}(\dom)\times W^{2,p}(\dom)$ and
\begin{equation}\label{T-estimate}
||[\mathcal{T}(v^1,v^2)]^J||_{W^{2,p}(\dom)}
\leq C\left(||f^J||_{W^{2-\frac{1}{p},p}(\pr\dom)} +||v^1||_{W^{2,p}(\dom)}^2 +  ||v^2||_{W^{2,p}(\dom)}^2\right).
\end{equation}
Assuming $||f||_{W^{2-\frac{1}{p},p}(\pr\dom)}<1/8(1+C)$, we see that $\mathcal{T}$ maps the ball of radius $1/4(1+C)$ in $W^{2,p}(\dom)\times W^{2,p}(\dom)$ to itself.

If $(v^1,v^2),(w^1,w^2)\in W^{2,p}(\dom)\times W^{2,p}(\dom)$, we have that
\begin{multline}
||[\mathcal{T}(v^1,v^2)-\mathcal{T}(w^1,w^2)]^J||_{W^{2,p}(\dom)}\\[5pt]
\leq C\left(||v^1||_{W^{2,p}(\dom)}+||v^2||_{W^{2,p}(\dom)}+||w^1||_{W^{2,p}(\dom)}+||w^2||_{W^{2,p}(\dom)}\right)\\[5pt]
\times\left(||v^1-w^1||_{W^{2,p}(\dom)}+||v^2-w^2||_{W^{2,p}(\dom)}  \right),
\end{multline}
so $\mathcal{T}$ is a contraction on the ball of radius $1/4(1+C)$ in $W^{2,p}(\dom)\times W^{2,p}(\dom)$. The unique fixed point of $\mathcal{T}$ is the solution to \eqref{sys}. This solution $(u^1,u^2)$ then satisfies the estimate
\begin{equation}\label{sys-estimate}
||u^J||_{W^{2,p}(\dom)}\leq C\left(||f^1||_{W^{2-\frac{1}{p},p}(\pr\dom)}+||f^2||_{W^{2-\frac{1}{p},p}(\pr\dom)}\right),\quad J=1,2.
\end{equation}

Once we know \eqref{sys} has unique solutions for at least sufficiently small boundary data, we can define the Dirichlet-to-Neumann map $\Lambda:W^{2-\frac{1}{p},p}(\pr\dom)\times W^{2-\frac{1}{p},p}(\pr\dom)\to W^{1-\frac{1}{p},p}(\pr\dom)\times W^{1-\frac{1}{p},p}(\pr\dom)$  to be
\begin{equation}
[\Lambda(f^1,f^2)]^J=\vc n\cdot\nabla u^J|_{\pr\dom},\quad J=1,2,
\end{equation}
where $\vc n$ is the outer pointing unit normal vector on $\pr\dom$.

We can prove the following uniqueness result.

\begin{thm}\label{inverse-thm}
Suppose $A^{JK}$, $\tilde{A}^{JK}$, are two sets of coefficients as above, and that $\Lambda=\Lambda'$. Then $A^{JK}=\tilde{A}^{JK}$, $J,K=1,2$.
\end{thm}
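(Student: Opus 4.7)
I would employ the ``second linearization'' strategy from \cite{I1}, reducing the uniqueness question to the integral identity of Theorem \ref{integral-thm}. Parameterize the boundary data by $f^J=\epsilon_1 g_1^J+\epsilon_2 g_2^J$, $J=1,2$, with each $g_i^J$ small, and let $u^J(\epsilon)$ be the corresponding solution of \eqref{sys}. The smooth dependence of $u^J$ on $\epsilon=(\epsilon_1,\epsilon_2)$ coming from the contraction-mapping argument above allows us to differentiate. Since the nonlinearity is quadratic in $\nabla u$, the first-order coefficients $v_i^J:=\pr_{\epsilon_i}u^J|_{\epsilon=0}$ satisfy $\Delta v_i^J=0$ with $v_i^J|_{\pr\dom}=g_i^J$, while the mixed second derivative $W^J:=\pr_{\epsilon_1}\pr_{\epsilon_2}u^J|_{\epsilon=0}$ solves
\begin{equation}
-\Delta W^J+\sum_{K=1}^2\nabla\cdot\bigl(A^{JK}:(\nabla v_1^J\otimes\nabla v_2^K+\nabla v_2^J\otimes\nabla v_1^K)\bigr)=0,\quad W^J|_{\pr\dom}=0.
\end{equation}

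I would then choose all $g_i^J$ to be boundary traces of functions harmonic in a neighborhood $\mathcal O$ of $\overline\dom$, so each $v_i^J$ is harmonic on $\mathcal O$, and fix an auxiliary $v_3^J$ also harmonic on $\mathcal O$. Multiplying the equation for $W^J$ by $v_3^J$ and integrating by parts over $\dom$ (twice on the Laplacian, using $\Delta v_3^J=0$ and $W^J|_{\pr\dom}=0$, and once on the divergence) and repeating for the system with $\tilde A^{JK}$ produces a companion $\tilde W^J$. The assumption $\Lambda=\tilde\Lambda$ forces $\vc n\cdot\nabla W^J=\vc n\cdot\nabla\tilde W^J$ on $\pr\dom$. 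Setting $D^{JK}=A^{JK}-\tilde A^{JK}$ and subtracting yields
\begin{multline}\label{plan-ident}
\sum_K\int_\dom\nabla v_3^J\cdot\bigl(D^{JK}:(\nabla v_1^J\otimes\nabla v_2^K+\nabla v_2^J\otimes\nabla v_1^K)\bigr)\dd\vc x\\=\sum_K\int_{\pr\dom}v_3^J\,\vc n\cdot\bigl(D^{JK}:(\nabla v_1^J\otimes\nabla v_2^K+\nabla v_2^J\otimes\nabla v_1^K)\bigr)\dd s.
\end{multline}

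The main obstacle is the boundary term in \eqref{plan-ident}: it is nonzero unless $A^{JK}$ and $\tilde A^{JK}$ agree on $\pr\dom$ together with all their derivatives. I would address this by a separate boundary determination argument that extracts the boundary jet of $A^{JK}$ from $\Lambda$ using localized/high-frequency boundary data, in the spirit of Kohn-Vogelius-type identifications adapted to the quadratic structure of \eqref{sys}. Once $D^{JK}$ and all its derivatives are shown to vanish on $\pr\dom$, the tensor extends smoothly by zero to a compactly supported object on $\mathcal O$, and the right-hand side of \eqref{plan-ident} is zero.

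With the clean interior identity in hand, I would isolate each $D^{JK}$ by exploiting the independence of the component-wise boundary data. Fixing $J$ and setting $g_i^{L}=0$ for $L\neq J$ and $i=1,2$ kills every $K\neq J$ contribution; using the symmetry $D^{JJ}_{jkl}=D^{JJ}_{jlk}$ inherited from $A^{JJ}$, what remains is $2\int D^{JJ}:\nabla v_3^J\otimes\nabla v_1^J\otimes\nabla v_2^J\,\dd\vc x=0$ for arbitrary harmonic $v_1^J,v_2^J,v_3^J$ on $\mathcal O$, and Theorem \ref{integral-thm} yields $D^{JJ}=0$. For $K\neq J$, setting $g_2^J=g_1^K=0$ reduces the identity to $\int D^{JK}:\nabla v_3^J\otimes\nabla v_1^J\otimes\nabla v_2^K\,\dd\vc x=0$ for arbitrary harmonic $v_1^J,v_2^K,v_3^J$ on $\mathcal O$, and Theorem \ref{integral-thm} again yields $D^{JK}=0$.
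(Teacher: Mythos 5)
Your skeleton is the same as the paper's: second linearization of the Dirichlet-to-Neumann map, reduction to an integral identity with one factor $\nabla w$ and two factors coming from the first linearization, and then an application of Theorem \ref{integral-thm}, with the diagonal blocks $A^{JJ}$ handled through their symmetry in the last two indices and the off-diagonal blocks through the independence of the three harmonic functions. Your two-parameter differentiation is just a polarized form of the paper's expansion $u^J_\epsilon=\epsilon v^J_0+\epsilon^2 v^J_1+\mathscr{O}(\epsilon^3)$, and your endgame coincides with the paper's.

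The genuine gap is your treatment of the boundary term. You single it out as the main obstacle and then remove it by fiat, invoking an unspecified Kohn--Vogelius-type determination of the boundary jet of $A^{JK}$; that step is never carried out, and for this quasilinear system it would itself be a nontrivial inverse-problem argument, so the proof is incomplete exactly where you flagged the difficulty. The step is also mis-calibrated in two respects. First, the boundary integrand involves only the boundary values of the normal contractions $\sum_j n_j D^{JK}_{jkl}$ against gradients of harmonic functions, not the full jet of $D^{JK}$, so ``agreement of all derivatives on $\pr\dom$'' is more than is relevant. Second, no smooth extension by zero is needed before invoking the density result: Theorem \ref{integral-thm} is stated for finite measures supported in $\overline{\dom}$ (the reduction to smooth, compactly supported tensors is performed inside the paper), so $D^{JK}\dd\vc x$ is admissible as soon as the interior identity holds. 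The paper never meets your obstacle because it pairs the second-order Neumann data with the harmonic function $w$ in the weak (conormal) form natural to the divergence-form equation: since $v^J_1\in W^{1,p}_0(\dom)$ and $\Delta w=0$, that pairing is exactly the interior integral, which yields \eqref{ajk} with no boundary remainder. Either adopt that formulation of the DN pairing, or you must actually prove the boundary identification of the normal components of $D^{JK}$ on $\pr\dom$ that your sketch presupposes; as written, this part of your argument is a placeholder rather than a proof.
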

\begin{proof}
The usual method used in proving this kind of result is the so called ``second linearization'' trick, first used in \cite{I1}. In order to apply it here, consider $\epsilon>0$, sufficiently small. Let $u_\epsilon^J$, $J=1,2$, be the solution in $\dom$ to 
\begin{equation}
\left\{\begin{array}{l} -\Delta u^J_\epsilon+\sum_{K=1}^2\nabla(A^{JK}:\nabla u^J_\epsilon\otimes\nabla u^K_\epsilon)=0,\quad J=1,2,\\[5pt] (u^1_\epsilon,u^2_\epsilon)|_{\pr\dom}=\epsilon(f^1,f^2).\end{array}\right.,
\end{equation}
for some pair $f^1,f^2\in W^{2-\frac{1}{p},p}(\pr\dom)$ of Dirichlet data. By \eqref{T-estimate} and \eqref{sys-estimate} we have that
\begin{equation}
u^J_\epsilon=\epsilon v^J_0+\mathscr{O}(\epsilon^2),
\end{equation}
where the $\mathscr{O}(\epsilon^2)$ should be understood in the sense of $W^{2,p}(\dom)$ norms, and the $v^J_0$ are as defined above. Let $v_1^J$ satisfy
\begin{equation}
\left\{\begin{array}{l} -\Delta v^J_1+\sum_{K=1}^2\nabla(A^{JK}:\nabla v^J_0\otimes\nabla v^K_0)=0,\quad J=1,2,\\[5pt] (v^1_1,v^2_1)|_{\pr\dom}=0.\end{array}\right..
\end{equation}
We can also state this as
\begin{equation}
(v^1_1,v^2_1)=\mathcal{T}(v^1_0,v^2_0)-(v^1_0,v^2_0)=-G_0\left(\sum_{K=1}^2\nabla(A^{JK}:\nabla v^J_0\otimes\nabla v^K_0)\right).
\end{equation}
Then
\begin{equation}
(u^1_\epsilon,u^2_\epsilon)=\mathcal{T}^2(u^1_\epsilon,u^2_\epsilon)
=\epsilon(v^1_0,v^2_0)+\epsilon^2(v^1_1,v^2_1)
+\mathscr{O}(\epsilon^3).
\end{equation}

It follows that the Dirichlet-to-Neumann map has the following expansion in $\epsilon$
\begin{equation}
[\Lambda(f^1,f^2)]^J=\epsilon\vc n\cdot\nabla v_0^J+\epsilon^2\vc n\cdot\nabla v^J_1+\mathscr{O}(\epsilon^3).
\end{equation}
Suppose $w$ is a smooth harmonic function in a negihborhood of $\dom$. Then
\begin{multline}
\la [\Lambda(f^1,f^2)]^J,w|_{\pr\dom}\ra
=\epsilon \int \nabla v_0^J(\vc x)\cdot\nabla w(\vc x)\dd\vc x\\[5pt]+\epsilon^2\sum_{K=1}^2\int A^{JK}(\vc x):\nabla w(\vc x)\otimes\nabla v^J_0(\vc x)\otimes\nabla v^K_0(\vc x)\dd\vc x+\mathscr{O}(\epsilon^3).
\end{multline}
By the assumtions of Theorem \ref{inverse-thm}, since each order in $\epsilon$ must match in the two Dirichlet-to-Neumann maps, we have that
\begin{equation}\label{ajk}
\sum_{K=1}^2\int (A^{JK}-\tilde{A}^{JK})(\vc x):\nabla w(\vc x)\otimes\nabla v^J_0(\vc x)\otimes\nabla v^K_0(\vc x)\dd\vc x=0.
\end{equation}

Choosing $f^2=0$, we are left with the integral identity
\begin{equation}
\int (A^{11}-\tilde{A}^{11})(\vc x):\nabla w(\vc x)\otimes\nabla v^1_0(\vc x)\otimes\nabla v^1_0(\vc x)\dd\vc x=0.
\end{equation}
This case has been treated in \cite{carfeiz}. Since $A^{11}$ and $\tilde{A}^{11}$ are symmetric in the last two indices, by polarization we can conclude that
\begin{equation}
\int (A^{11}-\tilde{A}^{11})(\vc x):\nabla u_1(\vc x)\otimes\nabla u_2(\vc x)\otimes\nabla u_3(\vc x)\dd\vc x=0,
\end{equation}
for any three harmonic functions $u_1$, $u_2$, and $u_3$. By Theorem \ref{integral-thm} it follows that
$
A^{11}=\tilde{A}^{11}
$. Similarly we have that $A^{22}=\tilde{A}^{22}$.

Returning to \eqref{ajk}, we have that
\begin{equation}
\int (A^{12}-\tilde{A}^{12})(\vc x):\nabla w(\vc x)\otimes\nabla v^1_0(\vc x)\otimes\nabla v^2_0(\vc x)\dd\vc x=0.
\end{equation}
Since the harmonic functions $w$, $v^1_0$, and $v^2_0$ can be chosen arbitrarily and independently we can conclude that $A^{12}=\tilde{A}^{12}$, ans similarly that $A^{21}=\tilde{A}^{21}$. This concludes the proof of the theorem.
\end{proof}


\section{The case of products of two gradients of harmonic functions}\label{calderon}

Here we will investigate the analogous problem for tensor products of gradients of two harmonic functions. This question is related to the linearized anisotropic Calder\'on problem, i.e. the equivalent of the result of \cite{Ca} in the case of anisotropic conductivities. Because of this connection, we will first take the time to describe this inverse problem before giving a proof to Theorem \ref{lin-cal}.

In the domain $\dom$ consider equations of the type
\begin{equation}\label{eq-cond}
\left\{\begin{array}{l}\sum_{j,k=0}^n\pr_j(A_{jk}\pr_k u)=0,\\[5pt] u|_{\pr\dom}=f.\end{array}\right.
\end{equation}
We assume that the coefficients $A_{jk}\in C^\infty(\R^n)$ are elliptic, i.e. there exists $\lambda>0$ such that
\begin{equation}
 A:\vc\xi\otimes\vc\xi\geq  \lambda |\vc\xi|^2,\quad\forall\vc\xi\in\R^{1+n}.
\end{equation}
Here the Dirichlet-to-Neumann map $\Lambda_A:H^{\frac{1}{2}}(\pr\dom)\to H^{-\frac{1}{2}}(\pr\dom)$ is
\begin{equation}
\Lambda_A(f)=A:\vc n\otimes\nabla u|_{\pr\dom},
\end{equation}
where $u$ is the solution of \eqref{eq-cond}. 

As in section \ref{application}, we are interested in the question of uniqueness for the inverse boundary value problem: if $A$ and $\tilde{A}$ are both as above and furthermore $\Lambda_A=\Lambda_{\tilde{A}}$, does it follow that $A=\tilde{A}$? This  turns out not to be the case. It was first noted by Luc Tartar (account given in \cite{KV}) that if $\Phi:\overline{\dom}\to\overline{\dom}$ is a smooth diffeomorphism such that $\Phi(\vc x)=\vc x$ for any $\vc x\in\pr\dom$, and if
\begin{equation}
 \tilde{A}(\vc x)=\left[\det D\Phi(\vc x)\right]^{-1}(D\Phi(\vc x))^t A(\Phi^{-1}(\vc x)) (D\Phi(\vc x)),
\end{equation}
then $\Lambda_{\tilde{A}}=\Lambda_A$. The question then becomes: is this the only obstruction to uniqueness? In dimension $1+n\geq 3$, there is so far no known answer.

The dependence of $\Lambda_A$ on $A$ is nonlinear and difficult to characterize in a useful way. In \cite{Ca}, Calder\'on has considered the linearized version of the problem for isotropic coefficients (i.e. $A_{jk}(\vc x)=\gamma(\vc x)\delta_{jk}$). Here we will sketch out how this idea works when the coefficients  are anisotropic. For $A$ as above, let $\Lambda'_I(A)$ be the Frech\'et derivative of the Dirichlet-to-Neumann map in the direction of $A$
\begin{equation}
\Lambda'_I(A)=w-\lim_{t\to0}\frac{1}{t}\left(\Lambda_{I+tA}-\Lambda_I\right),
\end{equation}
where $I$ is the matrix with coefficients $I_{jk}=\delta_{jk}$. In order to identify $\Lambda'_I(A)$, let $f\in H^{\frac{1}{2}}(\pr\dom)$ and $u_t$ be the solution to the problem
\begin{equation}
\left\{\begin{array}{l} \sum_{j,k=0}^n\pr_j\left[(\delta_{jk}+tA_{jk})\pr_k u_t\right]=0,\\[5pt] u_t|_{\pr\dom}=f.\end{array}\right.
\end{equation}
By well known elliptic estimates
\begin{equation}
||u_t||_{H^1(\dom)}\leq C||f||_{H^{\frac{1}{2}}(\dom)}.
\end{equation}
If we make the Ansatz
\begin{equation}
u_t=u_0+tw,
\end{equation}
then $w$ must satisfy
\begin{equation}
\left\{\begin{array}{l} \Delta w+\sum_{j,k=0}^n\pr_j\left(A_{jk}\pr_k u_t\right)=0,\\[5pt] w|_{\pr\dom}=0.\end{array}\right.
\end{equation}
It follows that
\begin{equation}
||w||_{H^1(\dom)}\leq C||u_t||_{H^1(\dom)}\leq C||f||_{H^{\frac{1}{2}}(\dom)}.
\end{equation}
Suppose that $g\in H^{\frac{1}{2}}(\pr\dom)$ and $v$ is the harmonic function whose trace on $\pr\dom$ is $g$. It is clear that
\begin{equation}
\la \Lambda'_I(A)f,g\ra=\int_{\pr\dom}A(\vc x):\nabla u_0(\vc x)\otimes\nabla v(\vc x)\dd\vc x.
\end{equation}
Therefore, if $\Lambda'_I(A)=\Lambda'_I(\tilde{A})$ we have that
\begin{equation}
\int_{\pr\dom}\left(A(\vc x)-\tilde{A}(\vc x)\right):\nabla u_1(\vc x)\otimes\nabla u_2(\vc x)\dd\vc x=0,
\end{equation}
for all harmonic functions $u_1$, $u_2$.

In order to identify the expected obstruction to uniqueness in the linearized anisotropic Calder\'on problem, suppose $\vc v\in C^\infty(\dom;\R^n)$ is such that $\vc v|_{\pr\dom}=0$ and let $\Phi_t$ be the flow generated by $\vc v$. Then
\begin{equation}
\Phi_t(\vc x)=\vc x+t\vc v(\vc x)+o(t),
\end{equation}
\begin{equation}
[D\Phi_t(\vc x)]_{jk}=\delta_{jk}+t\pr_j v_k(\vc x)+o(t),
\end{equation}
\begin{equation}
\det D\Phi_t(\vc x)=1+t\nabla\cdot\vc v+o(t).
\end{equation}
For a matrix $A(\vc x)$  let $A_t(\vc x)$ be the family of  matrices
\begin{equation}
A_t(\vc x)=\left[\det D\Phi_t(\vc x)\right]^{-1}(D\Phi_t(\vc x))^t A(\Phi_t^{-1}(\vc x))( (D\Phi_t(\vc x)).
\end{equation}
For the identity matrix $I$ and small $t$ we have
\begin{multline}
(I_t)_{jk}=(1-t\nabla\cdot\vc v)(\delta_{lj}+t\pr_l v_j)\delta_{lm}(\delta_{mk}+t\pr_m v_k)+o(t)\\[5pt]
=(1-t\nabla\cdot\vc v)\left[\delta_{jk}+t(\pr_j v_k+\pr_k v_j)\right]+o(t)\\[5pt]
=\delta_{jk}+t(\pr_j v_k+\pr_k v_j-\delta_{jk}\nabla\cdot\vc v)+o(t).
\end{multline}
Then
\begin{multline}
\Lambda'_I(A)=\left.w-\frac{\dd}{\dd t}\right|_{t=0}\Lambda_{I+tA}
=\left.\frac{\dd}{\dd t}\right|_{t=0}\Lambda_{(I_t+tA_t)}\\[5pt]
=\Lambda'_I\left(\left.\frac{\dd}{\dd t}\right|_{t=0}I_t+A+0\times\left.\frac{\dd}{\dd t}\right|_{t=0}A_t\right)\\[5pt]
=\Lambda'_I\left(\left.\frac{\dd}{\dd t}\right|_{t=0}I_t+A\right).
\end{multline}
So the expectation should be that uniqueness holds in the linearized anisotropic Calder\'on problem only up to a 
\begin{equation}
\pr_j v_k+\pr_k v_j-\delta_{jk}\nabla\cdot\vc v
\end{equation}
term, as claimed in Theorem \ref{lin-cal}.

\begin{proof}[Proof of Theorem \ref{lin-cal}]
We begin by proving sufficiency. 
Let $u$ be a harmonic function on $\R^{1+n}$ and $\vc v\in W^{1,p}_0(\dom)$. Then
\begin{multline}
\sum_{j,k=0}^n\int_\dom(\pr_j v_k(\vc x)+\pr_k v_j(\vc x))\pr_j u(\vc x)\pr_k u(\vc x)\dd\vc x\\[5pt]
=2\sum_{j,k=0}^n\int_\dom \pr_j v_k(\vc x)\pr_j u(\vc x)\pr_k u(\vc x)\dd\vc x\\[5pt]
=-2\sum_{j,k=0}^n\int_\dom v_k(\vc x)\pr_j u(\vc x)\pr_j\pr_k u(\vc x)\dd\vc x\\[5pt]
=-\sum_{j,k=0}^n\int_\dom v_k(\vc x)\pr_k\left(\pr_j u(\vc x)\pr_j u(\vc x)\right)\dd\vc x\\[5pt]
=\sum_{j=0}^n\int_\dom(\nabla\cdot\vc v(\vc x))\pr_j u(\vc x)\pr_j u(\vc x)\dd\vc x,
\end{multline}
so
\begin{equation}
\sum_{j,k=0}^n\int_\dom \left(\pr_j v_k(\vc x)+\pr_k v_j(\vc x)-\delta_{jk}\nabla\cdot\vc v(\vc x)\right)\pr_j u(\vc x)\pr_k u(\vc x)\dd\vc x=0,
\end{equation}
for all harmonic functions $u$. By polarization we have that
\begin{equation}
\sum_{j,k=0}^n\int_\dom \left(\pr_j v_k(\vc x)+\pr_k v_j(\vc x)-\delta_{jk}\nabla\cdot\vc v(\vc x)\right)\pr_j u_1(\vc x)\pr_k u_2(\vc x)\dd\vc x=0,
\end{equation}
for all harmonic functions $u_1$, $u_2$.

Now let $a_{jk}$ be as in the conclusion of the theorem. For $u_1$, $u_2$ harmonic functions on $\R^{1+n}$ we have
\begin{multline}
\sum_{j,k=0}^n\int_\dom a_{jk}(\vc x)\pr_j u_1(\vc x)\pr_k u_2(\vc x)\dd\vc x
=\sum_{j,k=0}^n\;\int_{\R^{1+n}} a_{jk}(\vc x)\pr_j \left[u_1(\vc x)\pr_k u_2(\vc x)\right]\dd\vc x\\[5pt]
-\sum_{j,k=0}^n\;\int_{\R^{1+n}} a_{jk}(\vc x) u_1(\vc x)\pr_j \pr_k u_2(\vc x) \dd\vc x
=-\sum_{j,k=0}^n\la \pr_j a_{jk},u_1\pr_k u_2\ra=0.
\end{multline}

The rest of the proof will show necessity. It helps to first split $C$ into symmetric and antisymmetric parts $C=C^s+C^a$, where
\begin{equation}
C^s_{jk}=\frac{1}{2}(C_{jk}+C_{kj}),\quad C^a_{jk}=\frac{1}{2}(C_{jk}-C_{kj}), \quad j,k=0,\ldots,n.
\end{equation}
As we have seen in the Introduction, it is easy to show that both $C^s$ and $C^a$ have the property \eqref{2-int-id}. If we choose Calder\'on type solutions
\begin{equation}
u_1=e^{i\vc \zeta_{+}\cdot \vc  x},\quad u_2=e^{i\vc \zeta_{-}\cdot \vc  x},
\end{equation}
in the identity for $C^a$, a simple computation gives that 
\begin{equation}
i|\vc \xi|\hat C^a(\vc \xi):\vc\eta\otimes\vc\xi=0,\quad\forall \vc\xi,\vc\eta,
\end{equation}
where we have identified $C^a$ with its extension by zero to the whole $\R^{1+n}$.
Since we also have that $\hat C^a(\vc\xi):\vc\xi\otimes\vc\xi=0$, it follows that
\begin{equation}
\sum_{j=0}^n\hat C^a(\vc\xi)_{jk}\xi_j=0.
\end{equation}
By inverting the Fourier transform we have  that $a_{jk}=C^a_{jk}$.

Let $\mathcal{O}$ be a neighborhood of $\overline{\dom}$. We will first assume that $C^s\in C_0^\infty(\mathcal{O})$ and satisfies 
\begin{equation}
\int_{\R^{1+n}} C^s(\vc x):\nabla u_1(\vc x)\otimes\nabla u_2(\vc x)\dd\vc x=0,
\end{equation}
for all $u_1$, $u_2$ harmonic functions in $\R^{1+n}$.
Let
\begin{equation}
v_j=\sum_{k=0}^nG_0(\pr_k C^s_{jk}),
\end{equation}
where $G_0$ is the Dirichlet Green's operator for the Laplacian on $\mathcal{O}$. That is, if 
\begin{equation}
\left\{\begin{array}{l}\Delta u=F\in H^{-1}(\dom),\\[5pt] u|_{\pr\dom}=0,\end{array}\right.
\end{equation}
then $G_0(F)=u\in H^1_0(\dom)$.
Define
\begin{multline}
B_{jk}=C^s_{jk}-\pr_j v_k-\pr_k v_j+\delta_{jk}\nabla\cdot\vc v\\[5pt]
=C^s_{jk}-\sum_{l=0}^nG_0(\pr_j\pr_lC^s_{kl})-\sum_{l=0}^nG_0(\pr_k\pr_lC^s_{jl})+\delta_{jk}\sum_{l,m=0}^n G_0(\pr_m\pr_lC^s_{lm}).
\end{multline}
It is easy to check that $B\in C_0^\infty(\mathcal{O};\R^{n\times n})$ and
\begin{multline}\label{sol}
\sum_{k=0}^n\pr_k B_{jk}
=\sum_{k=0}^n\pr_k C^s_{jk} -\sum_{l,m=0}^nG_0(\pr_j\pr_m\pr_l C^s_{ml})\\[5pt] -\sum_{k=0}^nG_0(\Delta\pr_k C^s_{jk}) +\sum_{l,m=0}^nG_0(\pr_j\pr_m\pr_l C^s_{ml})
=0,
\end{multline}
where the last step follows from 
\begin{equation}
\sum_{k=0}^n G_0(\Delta\pr_k C^s_{jk})=\pr_k C^s_{jk},
\end{equation}
which holds by uniqueness of solutions for the Poisson equation and the fact that $\sum_{k=0}^n\pr_k C^s_{jk}\in C_0^\infty(\mathcal{O})$.

It is clear that
\begin{equation}
\int_{\R^{1+n}} B(\vc x):\nabla u_1(\vc x)\otimes\nabla u_2(\vc x)\dd\vc x=0,
\end{equation}
for all $u_1$, $u_2$ harmonic functions in $\R^{1+n}$.
If we choose  again 
\begin{equation}
u_1=e^{i\vc \zeta_{+}\cdot \vc  x},\quad u_2=e^{i\vc \zeta_{-}\cdot \vc  x},
\end{equation}
 we obtain  that (here $B$ is identified with its extension by zero to $\R^{1+n}$)
\begin{equation}
\hat B(\vc \xi):\vc \zeta_+\otimes\vc \zeta_-=0,\quad \forall\vc \xi,\vc \mu.
\end{equation}
This is equivalent to
\begin{equation}\label{equiv}
\hat B(\vc \xi):\left(\vc \xi\otimes\vc \xi+|\vc \xi|^2\vc \mu\otimes\vc \mu\right)=0,\quad \forall\vc \xi,\vc \mu.
\end{equation}
Using \eqref{sol} and polarization
\begin{equation}
\hat B(\vc \xi):\vc\mu_1\otimes\vc\mu_2=0,\quad\forall\vc\mu_1,\vc\mu_2\perp\vc\xi,
\end{equation}
\begin{equation}
\hat B(\vc\xi):\vc\xi\otimes\vc\xi=\hat B(\vc\xi):\vc\xi\otimes\vc\mu=0, \forall\vc \xi\perp\vc \mu.
\end{equation}
So $B=0$.

It remains to remove the regularity restriction on $C^s$. Suppose now that $C^s\in L^p(\dom)$.
For $\vc y\in\R^{1+n}$ small enough we have that
\begin{equation}
\int C^s(\vc x-\vc y):\nabla u_1(\vc x)\otimes\nabla u_2(\vc x)\dd \vc x=0,
\end{equation}
for all smooth harmonic functions $u_1$, $u_2$  in $\R^{1+n}$.  As in the introduction, this is the case because a translation  of a harmonic function is still harmonic. Suppose now that $\varphi_\epsilon$ is a compactly supported smooth approximation of identity. It follows that
\begin{multline}
\int \left(\varphi_\epsilon*C^s\right)(\vc x):\nabla u_1(\vc x)\otimes\nabla u_2(\vc x)\dd \vc x\\[5pt]=
\int \varphi_\epsilon(\vc y)\left(\int C^s(\vc x-\vc y):\nabla u_1(\vc x)\otimes\nabla u_2(\vc x)\dd \vc x\right)\dd \vc y\\[5pt]=0,
\end{multline}
for all smooth harmonic functions $u_1$, $u_2$  in $\R^{1+n}$. As we have seen above, we have that
\begin{equation}
\varphi_\epsilon* C^s_{jk}=\pr_jv^\epsilon_k+\pr_k v^\epsilon_j-\delta_{jk}\nabla\cdot\vc v^\epsilon,
\end{equation}
where
\begin{equation}
v_j^\epsilon=\sum_{k=0}^nG_0(\pr_k \varphi_\epsilon*C^s_{jk}).
\end{equation}
Since $\varphi_\epsilon*C^s$ is Cauchy in $L^p(\mathcal{O})$, it follows that $\vc v^\epsilon$ is Cauchy in $W^{1,p}_0(\mathcal{O})$ and therefore has a limit $\vc v$. Taking limits we have that
\begin{equation}
C^s_{jk}=\pr_jv_k+\pr_k v_j-\delta_{jk}\nabla\cdot\vc v.
\end{equation}
Since $\mathcal{O}$ can be chosen arbitrarily, it follows that $\vc v|_{\R^{1+n}\setminus\overline{\dom}}=0$. Therefore $\vc v|_{\pr\dom}=0$. This ends the proof.
\end{proof}

\paragraph{Acknowledgments:} C\u at\u alin I. C\^arstea was supported by NSF of China under grant 11931011. Ali Feizmohammadi was supported by EPSRC grant EP/P01593X/1.

\bibliography{harmonic}
\bibliographystyle{plain}

\end{document}